\newtheorem{Theorem}{Theorem}           
\newtheorem{Lemma}{Lemma}       
\newtheorem{Proposition}{Proposition}        
\newtheorem{Corollary}{Corollary}
\theoremstyle{definition}
\newtheorem{Definition}{Definition}
\newtheorem{Example}{Example}
\begin{document}

\title[Short title]{The skew generalized von Neumann-Jordan constant in the unit sphere}


\author{Yuxin Wang, Qi Liu$^*$, Jinyu Xia and Shuaizhe Huang}

\address{Yuxin Wang, School of Mathematics and physics, Anqing Normal University, Anqing 246133, P.R.China\\
\email{Y24060028@stu.aqnu.edu.cn}}

\address{Qi Liu, School of Mathematics and physics, Anqing Normal University, Anqing 246133, P.R.China\\
\email{liuq67@aqnu.edu.cn}}

\address{Jinyu Xia,  School of Mathematics and physics, Anqing Normal University, Anqing 246133, P.R.China\\
\email{Y23060036@stu.aqnu.edu.cn}}

\address{Shuaizhe Huang,  School of Mathematics and physics, Anqing Normal University, Anqing 246133, P.R.China\\
	\email{\texttt{drunk\_1011@qq.com}}}

\CorrespondingAuthor{Qi Liu}


\date{DD.MM.YYYY}                               

\keywords{Banach spaces; geometric constants; generalized von Neumann-Jordan constant; normal structure}

\subjclass{46B20, 46C15}


\begin{abstract}
       	In this paper, we introduce a new constant for Banach spaces, denoted as $\widetilde C_\mathrm{NJ}^{p}(\xi,\nu, X)$. We provide calculations for both the lower and upper bounds of this constant, as well as its exact values in certain Banach spaces. Furthermore, we give the inequality relationship between the $\widetilde C_\mathrm{NJ}^{p}(\xi,\nu, X)$ constant and the other two constants. Besides, we establish an equivalent relationship between the $\widetilde C_\mathrm{NJ}^{p}(\xi,\nu,X)$ constant and the $\widetilde{C}{_\mathrm{N J}^{(p)}(X)}$ constant. Specifically, we shall exhibit the connections between the constant $\widetilde C_\mathrm{NJ}^{p}(\xi,\nu, X)$ and certain geometric characteristics of Banach spaces, including uniform convexity and uniform nonsquareness. Additionally, a sufficient condition for uniform normal structure about the $\widetilde C_\mathrm{NJ}^{p}(\xi,\nu, X)$ constant is also established.
\end{abstract}

\maketitle



\section{Introduction}

      Throughout this article, let $X$ be a real Banach space with dimension at least $2$. We will define $S_X$ as the unit sphere, that is $S_X = \left\{x \in X : \|x\|_X= 1 \right\} $; $ B_X$ as the closed unit ball, that is $B_X = \left\{x \in X : \|x\|_X \leq 1\right\} $ ; and $ ex(B_X)$ as the collection of extreme points of $B_X$.
      
       Geometric constants have been thoroughly developed and are crucial to the theory of Banach spaces. Various geometric constants can help analyze and comprehend the space's qualities and offer useful information. Many geometric constants, such as  $C_\mathrm{N J}(X)$, $C_\mathrm{N J}^{(p)}(X)$  and the $L_\mathrm{Y J}(\xi,\nu, X)$ constant which has a skew connection and so on, have been defined and explored in recent years. Through extensive research on these constants, many scholars have generalized them and explored their connections with other classical constants.
       
       Now let us recall some definitions of geometric constants that will be needed in the following article. 
       
       As the smallest constant $C$, Clarkson\cite{01.1} presented the following von-Neumann constant,   $$\frac{1}{C}\leq\frac{\|x+y\|^2+\|x-y\|^2}{2\|x\|^2+2\|y\|^2}\leq C,$$ where $x,y\in X$ and not both $0$.
     
     Subsequently, the von Neumann-Jordan constant scholars are provided with an equivalent definition by study, as follows:
     
     $$
     C_{\mathrm{NJ}}(X) = \sup \left\{ \frac{\|x + y\|^2 + \|x - y\|^2}{2\|x\|^2 + 2\|y\|^2}:x,y\in X \text{~not both 0}\right\}.
     $$
     \\After extensive research endeavors by scholars, it has been uncovered that $C_{\text{NJ}}(X)$ serves as a valuable tool for characterizing Hilbert spaces, uniformly non-square spaces, and super-reflexive spaces. Additionally, numerous connections have been discovered between $C_{\text{NJ}}(X)$ and certain geometric properties of Banach spaces. For further insights and results related to $C_{\text{NJ}}(X)$, please refer to \cite{09,12}.
     
     In 2008, Alonso et al. \cite{03} give the following constant	$C_\mathrm{NJ}^{\prime}(X)$ based on $C_\mathrm{N J}(X)$, and it's introduced by
     $$
     C_\mathrm{NJ}^{\prime}(X)=\sup \left\{\frac{\|x+y\|_X^2+\|x-y\|_X^2}{4}:x,y\in X, \|x\|_X=\|y\|_X=1 \right\}.
     $$
     Similarly,  Hilbert spaces, uniformly non-square spaces are also characterized by the constant	$C_\mathrm{NJ}^{\prime}(X)$. After this, many scholars have become interested in generalizing the $C_\mathrm{N J}(X)$ constant and the  $C_\mathrm{NJ}^{\prime}(X)$ constant. 
     
     In 2015, Cui et al. \cite{06} defined the following constant denoted as $C{_\mathrm{N J}^{(p)}(X)}$.In fact, it's a generalization of $C_\mathrm{N J}(X)$. $$C_{\mathrm{NJ}}^{(p)}(X)=\sup\left\{\frac{\|x+y\|^{p}+\|x-y\|^{p}}{2^{p-1}\|x\|^{p}+2^{p-1}\|y\|^{p}}:x,y\in X,(x,y)\neq(0,0)\right\},$$ where $1\leq p<+\infty$.
     
     In 2017, Yang, Wang \cite{01} defined a new constant $\widetilde{C}{_\mathrm{N J}^{(p)}(X)}$ motivated by constants $C_\mathrm{NJ}^{\prime}(X)$ and ${C}{_\mathrm{N J}^{(p)}(X)}$. It's introduced by$$\widetilde{C}_{NJ}^{(p)}(X)=\sup\left\{\frac{\|x+y\|_X^p+\|x-y\|_X^p}{2^p}:x,y\in X,\|x\|_X=1,\|y\|_X=1\right\}, $$where  $1\leq p<+\infty$.
     
     Numerous scholars have carried out research on computing the exact values of these two constants in particular Banach spaces(see \cite{20}).
     
     In new research, a new geometric constant with a skew relationship $L_\mathrm{Y J}(\xi,\nu, X)$ for $\xi,\nu>0$ introduced by Liu et al. \cite{04} as $$L_\mathrm{Y J}(\xi,\nu, X)=\sup \left\{\frac{\|\xi x+\nu y\|^2+\|\nu x-\xi y\|^2}{\xi^2(\|x\|^2+\|y\|^2)+\nu^2(\|x\|^2+\|y\|^2)}: x,y\in X, (x,y)\neq (0,0) \right\}.$$
     Another similar constant$$
     L_\mathrm{Y J}^{\prime}(\xi,\nu,X)=\sup \left\{\frac{\|\xi x+\nu y\|^2+\|\nu x-\xi y\|^2}{2(\xi^2+\nu^2)}: x,y \in S_X \right\}.$$
     
     Afterwards, scholars discussed the connections between the $L_\mathrm{Y J}(\xi,\nu, X)$ constant and other constants. Additionally, they calculated the exact values of the $L_\mathrm{Y J}(\xi,\nu, X)$ constant in specific Banach spaces. For further information on the $L_\mathrm{Y J}(\xi,\nu,X)$ constant, please refer to \cite{18,19}.
     
     In recent days, through the generalization of the $L_\mathrm{Y J}(\xi,\nu, X)$ constant. The author \cite{13} introduced a skew generalized von Neumann-Jordan constant denoted as $  C_\mathrm{NJ}^{p}(\xi,\nu,X)$, and it's definition as follows: for $ \xi,\nu>0$,$$C_\mathrm{NJ}^p(\xi,\nu,X)=\sup\left\{\frac{\|\xi x+\nu y\|^p+\|\nu x-\xi y\|^p}{2^{p-2}(\xi^p+\nu^p)(\|x\|^p+\|y\|^p)}:x,y\in X ~\text{not both 0}\right\}, $$where $1\leq p<+\infty$.

     It is evident that $C_\mathrm{NJ}^{\prime}(X)$, $\widetilde{C}{_\mathrm{N J}^{(p)}(X)}$, and $L_\mathrm{Y J}^{\prime}(\xi,\nu, X)$ are all constants that are defined within the unit sphere. Naturally, we will ask whether the skew generalized von Neumann-Jordan constant $  C_\mathrm{NJ}^{p}(\xi,\nu, X)$ can be generalized in the unit sphere. Then the following results are the key to the question. 
     
     The arrangement of this article is as follows: \\In the second part, we will define a new geometric constant, denoted as $\widetilde C_\mathrm{NJ}^{p}(\xi,\nu, X)$. We will provide a calculation for the upper and lower bounds of this constant, as well as its values in some specific Banach spaces. Additionally, we will discuss its relationship with some other constants.\\ In the third part, we will discuss some geometric properties of Banach spaces based on the $\widetilde C_\mathrm{NJ}^{p}(\xi,\nu, X)$ constant.\\In the last part, we shall establish a sufficient condition for uniform normal structure in relation to the $\widetilde C_\mathrm{NJ}^{p}(\xi,\nu, X)$ constant.
     \section{ The constant $\widetilde C_\mathrm{NJ}^{p}(\xi,\nu,X)$ }
     By conducting some studies on the $  C_\mathrm{NJ}^{p}(\xi,\nu, X)$ constant, we obtain the following constant, which is denoted as $\widetilde C_\mathrm{NJ}^{p}(\xi,\nu, X)$.
     \begin{Definition}
     	Let $X$ be a  Banach space, for $\xi,\nu>0$, $$\widetilde C_\mathrm{N J}^p(\xi,\nu,X)=\sup \left\{\frac{\|\xi x+\nu y\|^p+\|\nu x-\xi y\|^p}{2^{p-1}(\xi^p+\nu^p)}: x,y \in S_X\right\},$$where $1\leq p<+\infty$.
     \end{Definition} 
     
     Then let's determine the upper and lower limits of the constant $\widetilde C_\mathrm{NJ}^{p}(\xi,\nu,X)$.
     \begin{Proposition}
     	Let $X$ be a  Banach space, then $$\frac{(\xi+\nu)^{p}+|\nu-\xi|^{p}}{2^{p-1}(\xi^{p}+\nu^{p})}\leq \widetilde C_\mathrm{N J}^p(\xi,\nu,X)\leq\frac{(\xi+\nu)^{p}}{2^{p-2}(\xi^{p}+\nu^{p})}.$$
     	
     \end{Proposition}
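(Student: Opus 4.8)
The plan is to prove the two inequalities separately, exploiting the fact that the constant is a supremum of the quotient
\[
Q(x,y)=\frac{\|\xi x+\nu y\|^p+\|\nu x-\xi y\|^p}{2^{p-1}(\xi^p+\nu^p)}
\]
over all $x,y\in S_X$. For the lower bound, it suffices to evaluate $Q$ at one convenient admissible pair, since the supremum dominates any such value. The natural candidate is $x=y$: then $\xi x+\nu y=(\xi+\nu)x$ and $\nu x-\xi y=(\nu-\xi)x$, so that $\|\xi x+\nu y\|=\xi+\nu$ and $\|\nu x-\xi y\|=|\nu-\xi|$, using $\xi,\nu>0$ together with $\|x\|=1$. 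Substituting into $Q$ yields exactly $\dfrac{(\xi+\nu)^p+|\nu-\xi|^p}{2^{p-1}(\xi^p+\nu^p)}$, which is therefore a lower bound for $\widetilde C_\mathrm{NJ}^p(\xi,\nu,X)$.

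For the upper bound, I would bound the numerator of $Q$ uniformly over all $x,y\in S_X$ by the triangle inequality. For every such pair,
\[
\|\xi x+\nu y\|\leq \xi\|x\|+\nu\|y\|=\xi+\nu,\qquad \|\nu x-\xi y\|\leq \nu\|x\|+\xi\|y\|=\xi+\nu,
\]
so the numerator is at most $2(\xi+\nu)^p$. Dividing by the denominator $2^{p-1}(\xi^p+\nu^p)$ gives $Q(x,y)\leq \dfrac{(\xi+\nu)^p}{2^{p-2}(\xi^p+\nu^p)}$ for all admissible $x,y$, and taking the supremum over $S_X\times S_X$ preserves this inequality, establishing the right-hand bound.

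The argument rests only on the triangle inequality and the normalization $\|x\|=\|y\|=1$, so there is no substantial obstacle. The single point requiring minor care is the exponent bookkeeping: the factor $2$ appearing in the numerator bound combines with the $2^{p-1}$ in the denominator to produce $2^{p-2}$. I would also note that the sign hypothesis $\xi,\nu>0$ is what permits writing $\|(\nu-\xi)x\|=|\nu-\xi|$ in the lower-bound computation, so that the stated expression is correct regardless of whether $\xi\leq\nu$ or $\xi>\nu$.
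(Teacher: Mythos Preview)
Your proof is correct and follows essentially the same strategy as the paper: the lower bound is obtained identically by setting $y=x$, and the upper bound in both cases reduces to showing $\|\xi x+\nu y\|^p+\|\nu x-\xi y\|^p\leq 2(\xi+\nu)^p$. The only cosmetic difference is that the paper derives $\|\xi x+\nu y\|^p\leq(\xi+\nu)^p$ via convexity of $t\mapsto\|t\|^p$, whereas you use the triangle inequality followed by monotonicity of $t\mapsto t^p$; these are equivalent one-line arguments leading to the same estimate.
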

     \begin{proof}
     	We prove the right inequality first.\\ According to the convex function $h(x)=\|x\|^p$, we have
     	$$\begin{aligned}
     		&\frac{\|\xi x+\nu y\|^{p}+\|\nu x-\xi y\|^{p}}{(\xi+\nu)^{p}}\\& =\bigg\|\frac{\xi}{\xi+\nu}x+\frac{\nu }{\xi+\nu}y\bigg\|^{p}+\bigg\|\frac{\nu}{\xi+\nu}x+\frac{\xi}{\xi+\nu}(-y)
     		\bigg\|^{p} \\
     		&\leq\frac{\xi}{\xi+\nu}\|x\|^p+\frac{\nu}{\xi+\nu}\|y\|^p+\frac{\nu}{\xi+\nu}\|x\|^p+\frac{\xi}{\xi+\nu}\|y\|^{p}\\&=2.
     	\end{aligned}$$
     	This means that$$\|\xi x+\nu y\|^{p}+\|\nu x-\xi y\|^{p}\leq2(\xi+\nu)^{p},$$
     	hence $$	\frac{\|\xi x+\nu y\|^{p}+\|\nu x-\xi y\|^{p}}{2^{p-1}(\xi^p+\nu^p)}\leq\frac{2(\xi+\nu)^{p}}{2^{p-1}(\xi^{p}+\nu^{p})}=\frac{(\xi+\nu)^{p}}{2^{p-2}(\xi^{p}+\nu^{p})},$$\\which implies that $\widetilde C_\mathrm{N J}^p(\xi,\nu,X)\leq\frac{(\xi+\nu)^{p}}{2^{p-2}(\xi^{p}+\nu^{p})}.$
     	
     	On the other hand, let $y=x$, we have
     	$$\begin{aligned}
     		\frac{\|\xi x+\nu y\|^{p}+\|\nu x-\xi y\|^{p}}{2^{p-1}(\xi^p+\nu^p)}&=\frac{\|\xi x+\nu x\|^{p}+\|\nu x-\xi x\|^{p}}{2^{p-1}(\xi^p+\nu^p)}\\&=\frac{(\xi+\nu)^{p}+|\nu-\xi|^{p}}{2^{p-1}(\xi^{p}+\nu^{p})}.
     	\end{aligned}$$
     	
     	Hence $$\widetilde C_\mathrm{N J}^p(\xi,\nu,X)\geq\frac{(\xi+\nu)^{p}+|\nu-\xi|^{p}}{2^{p-1}(\xi^{p}+\nu^{p})}.$$
     \end{proof}

     \begin{Example}
     	(  $l_1$ spaces)	Let $X=(R^2,\|\cdot\|_1)$, then $\widetilde C_\mathrm{N J}^p(\xi,\nu,X)=\frac{(\xi+\nu)^{p}}{2^{p-2}(\xi^{p}+\nu^{p})}.$
     \end{Example}
     \begin{proof}
     	Let $x=(0,1)$, $y=(0,-1)$, we obtain$$\begin{aligned}\widetilde C_\mathrm{N J}^p(\xi,\nu,l_1)&\geq\frac{\|\xi x+\nu y\|_{1}^{p}+\|\nu x-\xi y\|_{1}^{p}}{2^{p-1}(\xi^{p}+\nu^{p})}\\&=\frac{2(\xi+\nu)^{p}}{2^{p-1}(\xi^{p}+\nu^{p})}\\&=\frac{(\xi+\nu)^{p}}{2^{p-2}(\xi^{p}+\nu^{p})}.\end{aligned}$$
     	Therefore by the upper bound of the constant, $\widetilde C_\mathrm{N J}^p(\xi,\nu,l_1)=\frac{(\xi+\nu)^{p}}{2^{p-2}(\xi^{p}+\nu^{p})}$ holds .
     \end{proof}
     \begin{Example}
     	(  $l_{\infty}$ spaces)	Let $X=(R^2,\|\cdot\|_\infty)$,  then  $\widetilde C_\mathrm{N J}^p(\xi,\nu,X)=\frac{(\xi+\nu)^{p}}{2^{p-2}(\xi^{p}+\nu^{p})}.$
     \end{Example}
     \begin{proof}
     	Let $x=(1,1)$, $y=(1,-1)$, we obtain$$\begin{aligned}\widetilde C_\mathrm{N J}^p(\xi,\nu,l_{\infty})&\geq\frac{\|\xi x+\nu y\|_{\infty}^{p}+\|\nu x-\xi y\|_{\infty}^{p}}{2^{p-1}(\xi^{p}+\nu^{p})}\\&=\frac{2(\xi+\nu)^{p}}{2^{p-1}(\xi^{p}+\nu^{p})}\\&=\frac{(\xi+\nu)^{p}}{2^{p-2}(\xi^{p}+\nu^{p})}.\end{aligned}$$
     	Therefore by the upper bound of the constant, $\widetilde C_\mathrm{N J}^p(\xi,\nu,l_{\infty})=\frac{(\xi+\nu)^{p}}{2^{p-2}(\xi^{p}+\nu^{p})}$ holds .
     \end{proof}
     
     \begin{Example}
     	
     	(  $l_1-l_{\infty}$ spaces). Let $X=l_1-l_{\infty}$ which is  $\mathbb{R}^2$  endowed with the norm 
     	$$\|x\|=\left\{\begin{array}{l}
     		\|x\|_{\infty}, \text { if } x_1 x_2 \geq 0, \\
     		\|x\|_1, \text { if } x_1 x_2 \leq 0.
     	\end{array}\right.$$
     	Then if $\xi\geq\nu$, $$\widetilde C_\mathrm{NJ}^{p}(\xi,\nu,l_{\infty}-l_{1})=\frac{\xi^p+(\xi^p+\nu^p)}{2^{p-1}(\xi^p+\nu^p)}.$$
     \end{Example}
     
     \begin{proof}
     	For any $x,y$ that belong to $S_X$, this can be expressed in the form of  extreme points as follows:
     	$$x=\lambda x_1+(1-\lambda)x_2\text{ and }y=\mu y_1+(1-\mu)y_2,$$
     	where $\lambda,\mu\in[0,1]$ and $x_1,x_2,y_1$, $y_2$ are extreme points of the corresponding line segment.
     	
     	According to Minkowski inequality, we have$$\begin{aligned}
     		&\|\xi x+\nu y\|^{p}+\|\nu x-\xi y\|^{p}\\&=\|\lambda(\xi x_1+\nu y)+(1-\lambda)(\xi x_2+\nu y)\|^p+\|\lambda(\nu x_1-\xi y)+(1-\lambda)(\nu x_2-\xi y)\|^p
     		\\&\leq\lambda\|\xi x_1+\nu y\|^p+(1-\lambda)\|\xi x_2+\nu y\|^p+\lambda\|\nu x_1-\xi y\|^p+(1-\lambda)\|\nu x_2-\xi y\|^p\\&=\lambda\|\mu(\xi x_1+\nu y_1)+(1-\mu)(\xi x_1+\nu y_2)\|^p+\lambda\|\mu(\nu x_1-\xi y_1)+(1-\mu)(\nu x_1-\xi y_2)\|^p\\&+
     		(1-\lambda)\|\mu(\xi x_{2}+\nu y_{1})+(1-\mu)(\xi x_{2}+\nu y_{2})\|^{p} \\
     		&+(1-\lambda)\|\mu\left(\nu x_{2}-\xi y_{1}\right)+(1-\mu)\left(\nu x_{2}-\xi y_{2}\right)\|^{p} \\
     		&\leqslant\lambda\mu\left[\|\xi x_{1}+\nu y_{1}\|^{p}+\|\nu x_{1}-\xi y_{1}\|^{p}\right]+\lambda(1-\mu)\left[\|\xi x_{1}+\nu y_{2}\|^{p}+\|\nu x_{1}-\xi y_{2}\|^{p}\right] \\
     		&+(1-\lambda)\mu\left[\|\xi x_2+\nu y_1\|^p+\|\nu x_2-\xi y_1\|^p\right] \\
     		&+(1-\lambda)(1-\mu)\left[\|\xi x_{2}+\nu y_{2}\|^{p}+\|\nu x_{2}-\xi y_{2}\|^{p}\right],
     	\end{aligned}$$
     	which means that $$\begin{aligned}
     		\|\xi x+\nu y\|^{p}+\|\nu x-\xi y\|^{p}\leqslant\operatorname*{max}\{\|\xi x_{1}+\nu y_{1}\|^{p}+\|\nu x_{1}-\xi y_{1}\|^{p}, \\
     		\|\xi x_{1}+\nu y_{2}\|^{p}+\|\nu x_{1}-\xi y_{2}\|^{p} , \\
     		\|\xi x_{2}+\nu y_{1}\|^{p}+\|\nu x_{2}-\xi y_{1}\|^{p} , \\
     		\|\xi x_{2}+\nu y_{2}\|^{p}+\|\nu x_{2}-\xi y_{2}\|^{p}\}.
     	\end{aligned}$$
     	Thus, we only need to take into account the exact values of the constant at extreme points. Since $ex(B_X)=\{(1,0),(0,1),(-1,0),(0,-1),(1,1),(-1,-1)\}$ by calculation and the above $x$ can be replaced by $-x$, the above $y$ can be replaced by $-y$ by observation, it is only necessary to consider the case when $x,y=(1,0),(0,1)$ or $(1,1)$. \\According to the inequality $\xi\geq\nu$, we can substitute the values of $x$ and $y$ given above into the calculation to find that the expression $\|\xi x+\nu y\|^{p}+\|\nu x-\xi y\|^{p}$ reaches its maximum at one of the following points: $$\begin{cases}x'_1=(0,1),\\y'_1=(1,0),\end{cases} \begin{cases}x'_2=(0,1),\\y'_2=(1,1),\end{cases} \text{or} \begin{cases}x'_3=(1,0),\\y'_3=(1,1),\end{cases}$$ additionally, it is always true that $\|\xi x+\nu y\|^{p}+\|\nu x-\xi y\|^{p}\leq\xi^p+(\xi^p+\nu^p)$.
     	
     	Therefore $\widetilde C_\mathrm{NJ}^{p}(\xi,\nu,l_{1}-l_{\infty})=\frac{\xi^p+(\xi^p+\nu^p)}{2^{p-1}(\xi^p+\nu^p)}$ holds.
     \end{proof}
     
     Once the upper and lower bounds of the $\widetilde C_\mathrm{NJ}^{p}(\xi,\nu, X)$ constant have been calculated and examples of the constant in particular Banach spaces have been listed, we investigate the relationships between this constant and other constants in more detail. The skew generalized von Neumann-Jordan constant $ C_\mathrm{NJ}^{p}(\xi,\nu,X)$ and the constant $\widetilde C_\mathrm{NJ}^{p}(\xi,\nu,X)$ will be connected, as will the $\widetilde{C}{_\mathrm{N J}^{(p)}(X)}$ constant. This certainly gives us a fresh viewpoint for a more thorough comprehension and application of geometric constants.

     Recall an inequality between von Neumann-Jordan constant $C_\mathrm{N J} (X)$ and the modified $\mathrm{N J}$ constant $C'_\mathrm{N J} (X)$ was proved by Alonso et al. \cite{03} as $$C_{\mathrm{NJ}}(X)\leq2(1+C_{\mathrm{NJ}}'(X)-\sqrt{2C_{\mathrm{NJ}}'(X)})\leq2.$$
     So it's necessary for us to discuss the relationship between the skew generalized von Neumann-Jordan constant $  C_\mathrm{NJ}^{p}(\xi,\nu, X)$ and the constant $\widetilde C_\mathrm{NJ}^{p}(\xi,\nu, X)$. Then the next theorem asks the question.
     
     \begin{Theorem}
     	Let $X$ be a Banach space, for $p>1$ and $\frac{1}{p}+\frac{1}{q}=1$. Then$$\widetilde{C}_{\mathrm{NJ}}^{p}(\xi,\nu,X)\leq C_{\mathrm{NJ}}^{p}(\xi,\nu,X)\leq2^{2-p}[1+(2^{\frac{1}{q}}(\widetilde{C}_{\mathrm{NJ}}^{p}(\xi,\nu,X))^{1/p}-1)^{q}]^{p-1}.$$
     	
     \end{Theorem}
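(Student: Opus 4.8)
The left-hand inequality is immediate and I would dispatch it first: when $x,y\in S_X$ one has $\|x\|^p+\|y\|^p=2$, so the quotient defining $C_{\mathrm{NJ}}^{p}(\xi,\nu,X)$ reduces on $S_X\times S_X$ to exactly the quotient defining $\widetilde{C}_{\mathrm{NJ}}^{p}(\xi,\nu,X)$. Hence $\widetilde{C}_{\mathrm{NJ}}^{p}(\xi,\nu,X)$ is a supremum of the $C_{\mathrm{NJ}}^{p}$-quotient taken over the smaller set $S_X\times S_X$, and $\widetilde{C}_{\mathrm{NJ}}^{p}\le C_{\mathrm{NJ}}^{p}$ follows at once. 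The substance of the theorem is the right-hand inequality.

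For the right-hand inequality I would exploit the joint $p$-homogeneity of the quotient to normalize, assuming without loss of generality that $\max\{\|x\|,\|y\|\}=1$ and writing $x=u$, $y=tv$ with $u,v\in S_X$ and $t\in[0,1]$ (the symmetric case $\|y\|=1\ge\|x\|$ is handled by the identical computation). The key algebraic observation is the convex decomposition $\xi u+t\nu v=t(\xi u+\nu v)+(1-t)\xi u$ together with $\nu u-t\xi v=t(\nu u-\xi v)+(1-t)\nu u$. By convexity of the norm these give $\|\xi x+\nu y\|\le t\|\xi u+\nu v\|+(1-t)\xi$ and $\|\nu x-\xi y\|\le t\|\nu u-\xi v\|+(1-t)\nu$, since $\|u\|=1$.

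Next I would apply Minkowski's inequality for the $\ell^p$-norm on $\mathbb{R}^2$ to the pair of right-hand sides, obtaining $(\|\xi x+\nu y\|^p+\|\nu x-\xi y\|^p)^{1/p}\le t\,(\|\xi u+\nu v\|^p+\|\nu u-\xi v\|^p)^{1/p}+(1-t)(\xi^p+\nu^p)^{1/p}$. Because $u,v\in S_X$, the definition of $\widetilde{C}_{\mathrm{NJ}}^{p}$ bounds the first term by $t\,2^{1/q}(\xi^p+\nu^p)^{1/p}(\widetilde{C}_{\mathrm{NJ}}^{p})^{1/p}$, using $(2^{p-1})^{1/p}=2^{1/q}$. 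Writing $B:=2^{1/q}(\widetilde{C}_{\mathrm{NJ}}^{p}(\xi,\nu,X))^{1/p}$, the numerator collapses to $(\xi^p+\nu^p)\,[1+t(B-1)]^p$ and all $\xi,\nu$ dependence cancels against the denominator, reducing the quotient to $\dfrac{[1+t(B-1)]^p}{2^{p-2}(1+t^p)}$. The final step is Hölder's inequality $1+t(B-1)\le(1+(B-1)^q)^{1/q}(1+t^p)^{1/p}$, which, since $p/q=p-1$, yields $[1+t(B-1)]^p\le(1+(B-1)^q)^{p-1}(1+t^p)$ and hence the claimed bound $2^{2-p}[1+(B-1)^q]^{p-1}$ uniformly in $t$; taking the supremum over all $x,y$ completes the proof.

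The point that needs the most care is the \emph{order} of the two scalar estimates: Minkowski must be used first, to linearize in $t$ and factor out $(\xi^p+\nu^p)^{1/p}$ cleanly, and only afterward Hölder, to produce the conjugate exponent $q$ and the power $p-1$. One must also verify $B-1\ge 0$ so that $(B-1)^q$ is well defined and Hölder applies with nonnegative entries; this follows from $\widetilde{C}_{\mathrm{NJ}}^{p}(\xi,\nu,X)\ge 2^{1-p}$, which is a consequence of the lower bound in Proposition~1 together with $(\xi+\nu)^p\ge\xi^p+\nu^p$ for $p\ge 1$. I do not anticipate difficulty from the one-variable reduction or from the endpoint $t=0$ (the case $y=0$), which already saturates the bound.
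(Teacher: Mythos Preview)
Your proof is correct, and it reaches the same one--variable reduction as the paper---after normalizing $\|x\|=1$, $t=\|y\|\in[0,1]$, both arguments bound the $C_{\mathrm{NJ}}^{p}$ quotient by
\[
2^{2-p}\,\frac{[\,1+t(B-1)\,]^{p}}{1+t^{p}},\qquad B=2^{1/q}\bigl(\widetilde{C}_{\mathrm{NJ}}^{p}(\xi,\nu,X)\bigr)^{1/p},
\]
but the two derivations differ in execution. The paper obtains this bound \emph{from below}: it evaluates $\widetilde{C}_{\mathrm{NJ}}^{p}$ at the sphere pair $(x,\,y/\|y\|)$, applies the reverse triangle inequality to produce lower bounds of the form $|\|\xi x+\nu y\|+\xi(\|y\|-1)|$, and then invokes the $\ell^{p}$ Minkowski inequality and rearranges. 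You instead work \emph{from above}, via the convex decomposition $\xi u+t\nu v=t(\xi u+\nu v)+(1-t)\xi u$ followed by the ordinary triangle inequality and Minkowski; this is shorter and avoids the absolute values and sign checks the paper's route requires. For the final step the paper maximizes $f(t)$ by differentiation, while you bypass the calculus entirely with H\"older's inequality $1+t(B-1)\le(1+(B-1)^{q})^{1/q}(1+t^{p})^{1/p}$, which gives the same supremum (H\"older is sharp here) and makes the appearance of the conjugate exponent $q$ and the power $p-1$ transparent. Your verification that $B\ge 1$ via Proposition~1 and $(\xi+\nu)^{p}\ge\xi^{p}+\nu^{p}$ is the right way to justify the nonnegativity needed for H\"older.
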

     \begin{proof}
     	On the one hand, we have$$\frac{\|\xi x+\nu y\|^{p}+\|\nu x-\xi y\|^{p}}{2^{p-2}(\xi^{p}+\nu ^{p})(\|x\|^{p}+\|y\|^{p})}\geq\frac{\|\xi x+\nu y\|^{p}+\|\nu x-\xi y\|^{p}}{2^{p-1}(\xi^{p}+\nu^{p})},$$ which means that $\widetilde{C}_{\mathrm{NJ}}^{p}(\xi,\nu,X)\leq C_{\mathrm{NJ}}^{p}(\xi,\nu,X)$.\\However, without losing generality, we can assume that $0 \leq \|y\| \leq \|x\| = 1$ in the definition of $ C_\mathrm{NJ}^{p}(\xi,\nu,X)$. After that, we examine the scenario in which $0< \|y\| \leq \|x\| = 1$.
     	\\Since$$\left\|\xi x+\nu\frac{y}{\|y\|}\right\|=\left\|\frac{\xi x}{\|y\|}+\frac{\nu y}{\|y\|}+\xi x-\frac{\xi x}{\|y\|}\right\|\geq\left|\frac{\|\xi x+\nu y\|+\xi(\|y\|-1}{\|y\|}\right|,$$and$$\left\|\nu x-\xi\frac{y}{\|y\|}\right\|=\left\|\frac{\nu x}{\|y\|}-\frac{\xi y}{\|y\|}+\nu x-\frac{\nu x}{\|y\|}\right\|\geq\left|\frac{\|\nu x+\xi y\|+\nu(\|y\|-1)}{\|y\|}\right|.$$We obtain$$\begin{aligned}
     		\widetilde C_{\mathrm{NJ}}^p(\xi,\nu,X)& \geq\frac{\left\|\xi x+\nu\frac{y}{\|y\|}\right\|^{p}+\left\|\nu x-\xi \frac{y}{\|y\|}\right\|^{p}}{2^{p-1}(\xi^p+\nu^p) }\\
     		&\geq\frac{|\|\xi x+\nu y\|+\xi(\|y\|-1)|^{p}+|\|\nu x-\xi y\|+\nu(\|y\|-1)|^{p}}{2^{p-1}\|y\|^{p}(\xi^p+\nu^p)},
     	\end{aligned}$$and from the inequality$$
     	\begin{aligned}
     		&[|\|\xi x+\nu y\|+\xi(\|y\|-1)|^p+|\|\nu x-\xi y\|+\nu(\|y\|-1)|^p]^{\frac{1}{p}}\\&+(\xi^p(1-\|y\|)^p+\nu^p(1-\|y\|)^p)^{\frac{1}{p}}\\&\geq(\|\xi x+\nu y\|^p+\|\nu x-\xi y\|^p)^{\frac{1}{p}},
     	\end{aligned}$$we have$$\begin{aligned}
     		&[2^{p-1}\|y\|^{p}(\xi^{p}+n^{p})\widetilde C_{\mathrm{NJ}}^p(\xi,\nu,X)]^{\frac{1}{p}}\\& \geq[|\|\xi x+\nu y\|+\xi(\|y\|-1)|^{p}+|\|\nu x-\xi y\|+\nu(\|y\|-1)|^{p}]^{\frac{1}{p}} \\
     		&\geq(\|\xi x+\nu y\|^{p}+\|\nu x-\xi y\|^{p})^{\frac{1}{p}}-[\xi^{p}(1-\|y\|)^{p}+\nu^{p}(1-\|y\|)^{p}]^{\frac{1}{p}},
     	\end{aligned}$$ hence $$\begin{aligned}&2^{1-\frac{1}{p}}\|y\|(\widetilde C_{\mathrm{NJ}}^p(\xi,\nu,X))^{\frac{1}{p}}(\xi^{p}+\nu^{p})^{\frac{1}{p}}+(\xi^{p}+\nu^{p})^{\frac{1}{p}}(1-\|y\|)\\&\geq(\|\xi x+\nu y\|^{p}+\|\nu x-\xi y\|^{p})^{\frac{1}{p}}\\&=2^{1-\frac{2}{p}}(\xi^{p}+\nu^{p})^{\frac{1}{p}}(1+\|y\|^{p})^{\frac{1}{p}}(\frac{\|\xi x+\nu y\|^{p}+\|\nu x-\xi y\|^{p}}{2^{p-2}(\xi ^{p}+\nu^{p})(1+\|y\|)^{p}})^{\frac{1}{p}}\\&=2^{1-\frac{2}{p}}(\xi^{p}+\nu^{p})^{\frac{1}{p}}[1+\|y\|^{p}]^{\frac{1}{p}}\widetilde C_{\mathrm{NJ}}^p(\xi,\nu,X),\end{aligned}$$ we obtain\begin{equation}\label{e1}
     		\begin{aligned}(C_{\mathrm{NJ}}^p(\xi,\nu,X))^{\frac{1}{p}}&\leq\frac{2^{1-\frac{1}{p}}\|y\|(\widetilde C_{\mathrm{NJ}}^p(\xi,\nu,X))^{\frac{1}{p}}+(1-\|y\|)}{2^{1-\frac{2}{p}}[1+\|y\|^{p}]^{\frac{1}{p}}}\\&=\frac{[2(\widetilde C_{\mathrm{NJ}}^p(\xi,\nu,X)]^{\frac{1}{p}}\|y\|+2^{\frac{2}{p}-1}(1-\|y\|)}{(1+\|y\|^{p})^{\frac{1}{p}}}.\end{aligned}
     	\end{equation} 
     	Since $\frac{1}{2^{p-1}}\leq C_\mathrm{N J}^p(\xi,\nu,X)\leq 2$ has been verified in \cite{13}, then we know that the inequality (\ref*{e1}) is also true for $y=0$.\\Now we define a function $f(t)$  as $$f(t)=\frac{[2(\widetilde C_{\mathrm{NJ}}^p(\xi,\nu,X)]^{\frac{1}{p}}t+2^{\frac{2}{p}-1}(1-t)}{(1+t^{p})^{\frac{1}{p}}},$$ where  $0\leq t\leq 1$.
     	
     	Then $f(t)$ attains it's maximum at $t'=[2(\widetilde C_{\mathrm{NJ}}^p(\xi,\nu,X)]^{\frac{1}{p}}-2^{\frac{2}{p}-1}$, thus we have $$(C_{\mathrm{NJ}}^p(\xi,\nu,X))^{\frac{1}{p}}\leq f(t')=2^{\frac{2}{p}-1}[1+(2^{\frac{1}{q}}(\widetilde{C}_{\mathrm{NJ}}^{p}(\xi,\nu,X))^{1/p}-1)^{q}]^{1-\frac{1}{p}}.$$ This means that  $$C_{\mathrm{NJ}}^{p}(\xi,\nu,X)\leq2^{2-p}[1+(2^{\frac{1}{q}}(\widetilde{C}_{\mathrm{NJ}}^{p}(\xi,\nu,X))^{1/p}-1)^{q}]^{p-1}$$.\\Therefore the proof of this theorem is completed.
     \end{proof}
     We derive the following results by taking into account the link between the constants $\widetilde{C}{_\mathrm{N J}^{(p)}(X)}$ and $\widetilde C_\mathrm{NJ}^{p}(\xi,\nu, X)$. Theorem \ref{4.2} provides an inequality between the two constants, while Theorem \ref{4.3} states that there is some equivalence between these two constants.
     \begin{Lemma}\label{lemma}
     	There are two functions: $f_1(t)=\|x+t y\|^p+\|t x-y\|^p$ and $f_2(t)=\|tx+ y\|^p+\| x-ty\|^p$,  they are both convex functions of $t$ on $\mathbb{R}$.
     \end{Lemma}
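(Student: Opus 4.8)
The plan is to prove convexity term by term: both $f_1$ and $f_2$ are finite sums of functions of the form $t \mapsto \|u+tv\|^p$ for suitable fixed vectors $u,v \in X$, and since a finite sum of convex functions is again convex, it suffices to show that each such summand is convex on $\mathbb{R}$. Concretely, for $f_1$ the summands arise from $(u,v)=(x,y)$ and from $(u,v)=(-y,x)$ (rewriting $\|tx-y\|=\|{-y}+tx\|$), and for $f_2$ from $(u,v)=(y,x)$ (rewriting $\|tx+y\|=\|y+tx\|$) and $(u,v)=(x,-y)$ (rewriting $\|x-ty\|=\|x+t(-y)\|$).

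I would establish the convexity of a single summand using two classical facts. First, for fixed $u,v$ the map $t \mapsto \|u+tv\|$ is convex, because $t \mapsto u+tv$ is affine and the norm is a convex function, so their composition is convex. Second, the scalar function $g(s)=s^p$ is convex and, crucially, \emph{non-decreasing} on $[0,\infty)$ for every $1\le p<\infty$. Combining these through the composition principle (a non-decreasing convex outer function composed with a convex inner function is convex) yields the convexity of $t \mapsto \|u+tv\|^p$. Made explicit, for $t_1,t_2\in\mathbb{R}$ and $\lambda\in[0,1]$ one has the chain
$$\big\|u+(\lambda t_1+(1-\lambda)t_2)v\big\|^p \le \big(\lambda\|u+t_1 v\|+(1-\lambda)\|u+t_2 v\|\big)^p \le \lambda\|u+t_1 v\|^p+(1-\lambda)\|u+t_2 v\|^p,$$
where the first inequality applies the monotone map $s\mapsto s^p$ to the convexity estimate for the norm, and the second uses the convexity of $s\mapsto s^p$.

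There is no serious obstacle here; the only point requiring care is that the outer function $s\mapsto s^p$ must be verified to be non-decreasing on $[0,\infty)$, not merely convex, since a convex outer function composed with a convex inner function need not be convex without monotonicity. Because norms are non-negative, this hypothesis is automatically available, so the composition argument goes through for each summand. Summing the two convex summands making up $f_1$ (respectively $f_2$) then completes the proof.
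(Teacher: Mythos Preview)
Your proof is correct and follows essentially the same idea as the paper: both arguments rest on the convexity of $z\mapsto\|z\|^p$ (the paper invokes this directly on each term of the sum, while you justify it via the composition of the affine map $t\mapsto u+tv$, the convex norm, and the non-decreasing convex power $s\mapsto s^p$). The term-by-term decomposition you give is a slightly more explicit packaging of the same computation the paper carries out in one block.
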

     \begin{proof}
     	Let $t_1, t_2 \in \mathbb{R}$, $\mu \in(0,1)$ , since $h(x)=\|x\|^p$ is a convex function, we have$$
     	\begin{aligned}
     		&f_1(\mu x_1+(1-\mu)x_2)\\&=\left\|x+\left(\mu t_1+(1-\mu) t_2\right) y\right\|^p+\left\|\left(\mu t_1+(1-\mu) t_2\right) x-y\right\|^p\\&=\|\mu(x+t_1y)+(1-\mu)(x+t_2y)\|^p+\|\mu(t_1x-y)+(1-\mu)(t_2x-y)\|^p
     		\\
     		&\leq  \mu\left\|x+t_1 y\right\|^p+(1-\mu)\left\|x+t_2 y\right\|^p+\mu\left\|t_1 x-y\right\|^p+(1-\mu)\left\|t_2 x-y\right\|^p \\
     		&=  \mu\left(\left\|x+t_1 y\right\|^p+\left\|t_1 x-y\right\|^p\right)+(1-\mu)\left(\left\|x+t_2 y\right\|^p+\left\|t_2 x-y\right\|^p\right) \\
     		&=  \mu f_1\left(t_1\right)+(1-\mu) f_1\left(t_2\right),
     	\end{aligned}
     	$$
     	which means that $f_1(t)$ is a convex function.\\Similarly, we can also prove that	$f_2(\mu x_1+(1-\mu)x_2)\leq
     	\mu f_2\left(t_1\right)+(1-\mu) f_2\left(t_2\right),$	this means that $f_2(x)	$ is a convex function.
     \end{proof}
     
     \begin{Theorem}\label{4.2}
     	Let $X$ be a Banach space. Then$$\widetilde{C}_\mathrm{NJ}^p(\xi, \nu, X) \geqslant \frac{2[{\min}\{\xi, \nu\}]^p}{\xi^p+\nu^p} \widetilde{C}_\mathrm{NJ}^{(p)}(X) .$$
     \end{Theorem}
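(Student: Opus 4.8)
The plan is to reduce the asserted bound to a single \emph{pointwise} inequality between the quotient defining $\widetilde{C}_{\mathrm{NJ}}^{p}(\xi,\nu,X)$ and the one defining $\widetilde{C}_{\mathrm{NJ}}^{(p)}(X)$, and then pass to the supremum; since we only need a \emph{lower} bound on a supremum, the final passage is automatic once the pointwise estimate holds for every admissible pair. First I would observe that the defining expression is symmetric in $\xi,\nu$: replacing $y$ by $-y$ and swapping the two summands shows $\widetilde{C}_{\mathrm{NJ}}^{p}(\nu,\xi,X)=\widetilde{C}_{\mathrm{NJ}}^{p}(\xi,\nu,X)$, while the target factor $2[\min\{\xi,\nu\}]^p/(\xi^p+\nu^p)$ is itself symmetric, so I may assume $\xi\le\nu$ and hence $\min\{\xi,\nu\}=\xi$. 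Setting $s=\nu/\xi\ge 1$ and factoring $\xi^p$ out, for every $x,y\in S_X$ one gets
\[
\|\xi x+\nu y\|^p+\|\nu x-\xi y\|^p=\xi^p\bigl(\|x+sy\|^p+\|sx-y\|^p\bigr)=\xi^p f_1(s),
\]
where $f_1$ is precisely the convex function supplied by Lemma \ref{lemma}.

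The crux is the claim that $f_1(s)\ge f_1(1)$ for all $s\ge 1$ and all $x,y\in S_X$, because $f_1(1)=\|x+y\|^p+\|x-y\|^p$ is exactly the numerator in $\widetilde{C}_{\mathrm{NJ}}^{(p)}(X)$. I would deduce this from convexity alone: writing $1=\tfrac{s-1}{s}\cdot 0+\tfrac1s\cdot s$ as a convex combination and using $f_1(0)=\|x\|^p+\|y\|^p=2$, convexity gives $f_1(s)\ge s\,f_1(1)-2(s-1)$, whence $f_1(s)-f_1(1)\ge (s-1)\bigl(f_1(1)-2\bigr)$. It then only remains to verify $f_1(1)\ge 2$, which is the one place I would inject an extra elementary input: by convexity of $t\mapsto t^p$ and the triangle inequality,
\[
\frac{\|x+y\|^p+\|x-y\|^p}{2}\ge\Bigl(\frac{\|x+y\|+\|x-y\|}{2}\Bigr)^p\ge\|x\|^p=1 .
\]

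Finally, combining $f_1(s)\ge f_1(1)$ with the factorization yields the pointwise inequality $\|\xi x+\nu y\|^p+\|\nu x-\xi y\|^p\ge\xi^p(\|x+y\|^p+\|x-y\|^p)$; dividing by $2^{p-1}(\xi^p+\nu^p)$ and rearranging exhibits on the right the factor $2\xi^p/(\xi^p+\nu^p)$ multiplying the $\widetilde{C}_{\mathrm{NJ}}^{(p)}(X)$ quotient, and taking the supremum over $x,y\in S_X$ on both sides gives the theorem. The main obstacle I anticipate is exactly establishing that $f_1$ does not dip below $f_1(1)$ on $[1,\infty)$; the clean resolution is to note that the endpoint value $f_1(0)$ equals $2$ and that $f_1(1)\ge 2$, so that convexity forces monotone growth past $t=1$ — everything else is routine bookkeeping with the two definitions.
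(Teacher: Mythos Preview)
Your proof is correct and follows essentially the same route as the paper: both reduce to the pointwise inequality $f_1(s)\ge f_1(1)$ for $s\ge 1$ via the convexity of $f_1$ established in Lemma~\ref{lemma}, then divide by $2^{p-1}(\xi^p+\nu^p)$ and pass to the supremum (the paper treats the cases $\xi\le\nu$ and $\xi\ge\nu$ separately with $f_1$ and $f_2$, while you collapse them by the symmetry observation). Your argument is in fact more complete than the paper's, which simply asserts ``since $f_1(t)$ is a convex function, then $f_1(\nu/\xi)\ge f_1(1)$'' without further justification; you supply the missing ingredient---namely $f_1(0)=2\le f_1(1)$---that, together with convexity, actually forces $f_1$ to be nondecreasing on $[1,\infty)$.
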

     \begin{proof}
     	
     	Consider the functions $f_1: [0,+\infty] \rightarrow [0,+\infty]$  defined as
     	$$f_1(t)=\|x+t y\|^p+\|t x-y\|^p,$$ and $f_2: [0,+\infty] \rightarrow [0,+\infty]$  defined as
     	$$f_2(t)=\|tx+y\|^p+\|x-ty\|^p.$$
     	Notice that $f_1(1)=f_2(1)=\|x+y\|^p+\|x-y\|^p$.\\By Lemma \ref{lemma}, we know that $f_1(t),f_2(t)$ are both convex functions. Then we consider the following two cases.\\\textbf{Case 1}: If $\xi\leq\nu$. Since $f_1(t)$ is a convex function, then $f_1(\frac{\nu}{\xi})\geq f_1(1)$ is always valid for $\xi\leq\nu$.\\This means that for any $x,y\in S_X$, we have \begin{equation}\label{e2}
     		\begin{aligned}
     			\frac{\xi^p\left[\left\|x+\frac{\nu}{\xi} y\right\|^p+\left\|\frac{\nu}{\xi} x-y\right\|^p\right]}{2^{p-1}\left(\xi^p+\nu^p\right)}&\geqslant \frac{\xi^p \left(\|x+y\|^p+\|x-y\|^p\right)}{2^{p-1}\left(\xi^p+\eta^p\right)}\\&= \frac{2 \xi^p}{\xi^p+\nu^p} \cdot \frac{\|x+y\|^p+\|x-y\|^p}{2^p}.
     		\end{aligned}
     	\end{equation}
     	
     	Since there is a equivalent definition of the constant $\widetilde C_\mathrm{NJ}^{p}(\xi,\nu,X)$ which described as$$\widetilde C_\mathrm{N J}^p(\xi,\nu,X)=\sup \left\{\frac{\xi^p(\|\ x+\frac{\nu}{\xi} y\|^p+\|\frac{\nu}{\xi}x-
     		y\|^p)}{2^{p-1}(\xi^p+\nu^p)}: x,y \in S_X\right\},$$where $1\leq p<+\infty$, then$$\widetilde{C}_\mathrm{NJ}^p(\xi,\nu, X) \geqslant \frac{2\xi^p}{\xi^p+\nu^p} \widetilde{C}_\mathrm{NJ}^{(p)}(X) ,$$ holds by the inequality (\ref*{e2}).\\\textbf{Case 2}: If $\xi\geq\nu$. Since $f_2(t)$ is a convex function, then $f_2(\frac{\xi}{\nu})\geq f_2(1)$ is always valid for $\xi\geq\nu$.\\This means that for any $x,y\in S_X$, we have \begin{equation}\label{e3}\begin{aligned}
     			\frac{\nu^p\left[\left\|\frac{\xi}{\nu}x+ y\right\|^p+\left\| x-\frac{\xi}{\nu}y\right\|^p\right]}{2^{p-1}\left(\xi^p+\nu^p\right)}&\geqslant \frac{\nu^p \left(\|x+y\|^p+\|x-y\|^p\right)}{2^{p-1}\left(\xi^p+\nu^p\right)}\\&= \frac{2 \nu^p}{\xi^p+\nu^p} \cdot \frac{\|x+y\|^p+\|x-y\|^p}{2^p}.
     		\end{aligned}
     	\end{equation}\\Since there is an another equivalent definition of the constant $\widetilde C_\mathrm{NJ}^{p}(\xi,\nu,X)$ which described as$$\widetilde C_\mathrm{N J}^p(\xi,\nu, X)=\sup \left\{\frac{\nu^p(\|\ \frac{\xi}{\nu}x+ y\|^p+\|x-
     		\frac{\xi}{\nu} y\|^p)}{2^{p-1}(\xi^p+\nu^p)}: x,y \in S_X\right\},$$where $1\leq p<+\infty$, then$$\widetilde{C}_\mathrm{NJ}^p(\xi, \nu, X) \geqslant \frac{2\nu^p}{\xi^p+\nu^p} \widetilde{C}_\mathrm{NJ}^{(p)}(X) ,$$ holds by the inequality (\ref*{e3}).\\Combining the above two cases, we obtain$$\widetilde{C}_\mathrm{NJ}^p(\xi, \nu, X) \geqslant \frac{2[{\min}\{\xi, \nu\}]^p}{\xi^p+\nu^p} \widetilde{C}_\mathrm{NJ}^{(p)}(X) .$$
     	
     \end{proof}
     
     \begin{Theorem}\label{4.3}
     	Let $X$ be a Banach space, these three conclusions are on an equal foot :
     	
     	\qquad $(i)~~ \widetilde{C}{_\mathrm{N J}^{(p)}(X)}=2.$
     	
     	\qquad $(ii)~~ \widetilde C_\mathrm{NJ}^{p}(\xi,\nu, X)=\frac{(\xi+\nu)^{p}}{2^{p-2}(\xi^{p}+\nu^{p})}~\mathrm{for~all}~\xi, \nu.$
     	
     	\qquad $(iii)~~ \widetilde C_\mathrm{NJ}^{p}(\xi_0,\nu_0, X)=\frac{(\xi+\nu)^{p}}{2^{p-2}(\xi^{p}+\nu^{p})} ~\mathrm{for~some}~ \xi_{0}, \nu_{0}.$
     \end{Theorem}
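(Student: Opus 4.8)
The plan is to prove the cyclic chain $(i)\Rightarrow(ii)\Rightarrow(iii)\Rightarrow(i)$. The implication $(ii)\Rightarrow(iii)$ is immediate, since a statement holding for all $\xi,\nu$ holds in particular for some $\xi_0,\nu_0$. The two substantive implications $(i)\Rightarrow(ii)$ and $(iii)\Rightarrow(i)$ will both rest on one elementary observation together with a Hahn--Banach selection. The observation is that if $a_n,b_n\in[0,M]$ satisfy $a_n^p+b_n^p\to 2M^p$, then $a_n\to M$ and $b_n\to M$: indeed $M^p-a_n^p\ge 0$ and $M^p-b_n^p\ge 0$ have sum $2M^p-(a_n^p+b_n^p)\to 0$, so each tends to $0$. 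This is what lets the maximality of a sum of two $p$-th powers force each summand to its extreme value. I will also use the upper bound $\widetilde C_\mathrm{NJ}^p(\xi,\nu,X)\le \frac{(\xi+\nu)^p}{2^{p-2}(\xi^p+\nu^p)}$ from Proposition 1, and the elementary bound $\widetilde C_\mathrm{NJ}^{(p)}(X)\le 2$ coming from $\|x\pm y\|\le 2$.

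For $(i)\Rightarrow(ii)$, I would start from $\widetilde C_\mathrm{NJ}^{(p)}(X)=2$ and choose $x_n,y_n\in S_X$ with $\|x_n+y_n\|^p+\|x_n-y_n\|^p\to 2^{p+1}$; by the observation above (with $M=2$) this gives $\|x_n+y_n\|\to 2$ and $\|x_n-y_n\|\to 2$. By Hahn--Banach, pick norming functionals $f_n,g_n\in S_{X^*}$ with $f_n(x_n+y_n)=\|x_n+y_n\|$ and $g_n(x_n-y_n)=\|x_n-y_n\|$. Since $f_n(x_n),f_n(y_n)\le 1$ and their sum tends to $2$, we get $f_n(x_n)\to 1$ and $f_n(y_n)\to 1$; likewise $g_n(x_n)\to 1$ and $g_n(y_n)\to -1$. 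Then
$$\xi f_n(x_n)+\nu f_n(y_n)\le \|\xi x_n+\nu y_n\|\le \xi+\nu,\qquad \nu g_n(x_n)-\xi g_n(y_n)\le \|\nu x_n-\xi y_n\|\le \xi+\nu,$$
where the left-hand sides tend to $\xi+\nu$, so both norms converge to $\xi+\nu$. Substituting into the definition yields the lower bound $\frac{2(\xi+\nu)^p}{2^{p-1}(\xi^p+\nu^p)}=\frac{(\xi+\nu)^p}{2^{p-2}(\xi^p+\nu^p)}$, which matches the upper bound of Proposition 1 for every $\xi,\nu$, giving $(ii)$.

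For $(iii)\Rightarrow(i)$, I would run the same machinery in reverse. From $\widetilde C_\mathrm{NJ}^p(\xi_0,\nu_0,X)=\frac{(\xi_0+\nu_0)^p}{2^{p-2}(\xi_0^p+\nu_0^p)}$ I pick $x_n,y_n\in S_X$ with $\|\xi_0 x_n+\nu_0 y_n\|^p+\|\nu_0 x_n-\xi_0 y_n\|^p\to 2(\xi_0+\nu_0)^p$; since both norms are $\le \xi_0+\nu_0$, the observation (now with $M=\xi_0+\nu_0$) forces $\|\xi_0 x_n+\nu_0 y_n\|\to \xi_0+\nu_0$ and $\|\nu_0 x_n-\xi_0 y_n\|\to \xi_0+\nu_0$. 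Taking norming functionals $f_n,g_n$ for these two vectors and using $\xi_0,\nu_0>0$, the limits $\xi_0 f_n(x_n)+\nu_0 f_n(y_n)\to \xi_0+\nu_0$ and $\nu_0 g_n(x_n)-\xi_0 g_n(y_n)\to \xi_0+\nu_0$ force $f_n(x_n),f_n(y_n)\to 1$ and $g_n(x_n)\to 1$, $g_n(y_n)\to -1$, exactly as before. Then $\|x_n+y_n\|\ge f_n(x_n)+f_n(y_n)\to 2$ and $\|x_n-y_n\|\ge g_n(x_n)-g_n(y_n)\to 2$, both being $\le 2$, so $\frac{\|x_n+y_n\|^p+\|x_n-y_n\|^p}{2^p}\to 2$ and hence $\widetilde C_\mathrm{NJ}^{(p)}(X)=2$.

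I expect the main obstacle to be bookkeeping rather than conceptual: handling the suprema through approximating sequences (the extremal values need not be attained) and checking that the two functionals $f_n$ and $g_n$ simultaneously deliver the four limits $f_n(x_n)\to 1$, $f_n(y_n)\to 1$, $g_n(x_n)\to 1$, $g_n(y_n)\to -1$. The one place deserving care is the separation of the combined constraint into the two nonnegative pieces $\xi_0(1-f_n(x_n))$ and $\nu_0(1-f_n(y_n))$ (and their analogues for $g_n$), which is exactly where positivity of $\xi_0,\nu_0$ enters. Since the underlying observation needs no convexity, the argument is uniform in $p\in[1,+\infty)$ and no separate treatment of $p=1$ is required.
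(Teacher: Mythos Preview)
Your proposal is correct and complete; the Hahn--Banach/norming-functional machinery handles both nontrivial implications cleanly, and your bookkeeping with the nonnegative pieces $\xi_0(1-f_n(x_n))$, $\nu_0(1-f_n(y_n))$ (and the analogous pair for $g_n$) is exactly right.

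The paper's proof follows a genuinely different route. For $(i)\Rightarrow(ii)$ it avoids duality entirely: once $\|x_n+y_n\|\to 2$ and $\|x_n-y_n\|\to 2$, it uses the algebraic decompositions
\[
\xi x_n+\nu y_n=\xi(x_n+y_n)-(\xi-\nu)y_n,\qquad \nu x_n-\xi y_n=(\xi+\nu)(x_n-y_n)+\nu y_n-\xi x_n,
\]
together with the reverse triangle inequality, to push each norm up to $\xi+\nu$. For $(iii)\Rightarrow(i)$ the paper argues by contradiction: if $\widetilde C_\mathrm{NJ}^{(p)}(X)<2$, then there is a $\delta>0$ with $\min\{\|x+y\|,\|x-y\|\}\le 2(1-\delta)$ for all $x,y\in S_X$, and a direct triangle-inequality estimate then gives $\|\xi_0 x+\nu_0 y\|^p+\|\nu_0 x-\xi_0 y\|^p<2(\xi_0+\nu_0)^p$, contradicting $(iii)$. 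Thus the paper's argument is entirely elementary (only norm inequalities, no Hahn--Banach), at the cost of the two implications looking rather different from one another and of a small case analysis on $\xi\lessgtr\nu$. Your approach, by contrast, is symmetric---the same norming-functional device runs in both directions---and your direct proof of $(iii)\Rightarrow(i)$ sidesteps the paper's contrapositive detour through the uniform-non-squareness reformulation.
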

     \begin{proof}
     	Obviously, the foregoing results are valid for $\xi=\nu$, therefore we consider $\xi<\nu$ and $\nu<\xi$. If we assume $\nu<\xi$ without losing generality, then the case $\xi<\nu$ is comparable.\\$(i)\Rightarrow(ii)$. Since $\widetilde{C}{_\mathrm{N J}^{(p)}(X)}=2$, we can deduce that there exists sequence of points $\left\{x_n\right\}$,~$\left\{y_n\right\}\in{S_X},$ such that$$\|x_n+y_n\|\to2, ~\|x_n-y_n\|\to2 ~(n\to\infty).$$\\So we have$$\begin{aligned}
     		||\xi x_{n}+\nu y_{n}||^{p}& =\|\xi (x_{n}+y_{n})-(\xi -\nu)y_{n})\|^{p} \\
     		&\geq[\xi||x_{n}+y_{n}||-|\xi-\nu|\|y_{n}\|]^{p}\\&=[2\xi-(\xi-\nu)]^{p}=(\xi+\nu)^{p},
     	\end{aligned}$$ and 
     	$$\begin{aligned}
     		\|\nu x_n-\xi y_n\|^p&=\|(\xi+\nu)(x_{n}-y_{n})+\nu y_{n}-\xi x_{n}\|^{p} \\&\geq[(\xi+\nu)\|x_{n}-y_{n}\|-\nu\|y_{n}\|-\xi\|x_{n}\|]^{p} \\&=[2(\xi+\nu)-\nu-\xi]^{p} =(\xi+\nu)^p.
     	\end{aligned}$$\\This means that there exists sequence of points $\left\{x_n\right\}$,~$\left\{y_n\right\}\in{S_x}$, such that$$\|\xi x_n+\nu y_n\|^p\to(\xi+\nu)^p, ~\|\nu x_n-\xi y_n\|^p\to(\xi+\nu)^p ~(n\to\infty).$$\\ This implies that $\widetilde C_\mathrm{NJ}^{p}(\xi,\nu,X)=\frac{(\xi+\nu)^{p}}{2^{p-2}(\xi^{p}+\nu^{p})}.$\\$(ii)\Rightarrow(iii)$. Obviously.\\$(iii)\Rightarrow(i)$.We use proof by contradiction to prove this conclusion. \\It's known that $\widetilde{C}{_\mathrm{N J}^{(p)}(X)}\leq 2$, if $\widetilde{C}{_\mathrm{N J}^{(p)}(X)}\neq2$, then we have $\widetilde{C}{_\mathrm{N J}^{(p)}(X)}<2$, which means that there exists $\delta>0$, such that either $\|\frac{x+y}2\|\leq1-\delta$,  or $\|\frac{x-y}2\|\leq1-\delta$ for any $x,y\in S_X$ . Then we can assume $\|\frac{x-y}2\|\leq1-\delta$, $\|\frac{x+y}2\|=1$ without loss of generality(the other case is similar). Then, for some $\xi_0,\nu_0$, we have$$\begin{aligned}
     		\|\xi_{0}x+\nu_{0}y\|^{p}+\|\nu_{0}x-\xi_{0}y\|^{p}&\leq[\|\xi_{0}x\|+\|\nu_{0}y\|]^{p}+\|\nu_{0}(x-y)+(\nu_{0}-\xi_{0})y\|^{p} \\&\leq(\xi_{0}+\nu_{0})^{p}+[2(1-\delta)\nu_{0}+\xi_{0}-\nu_{0}]^{p} \\&=(\xi_{0}+\nu_{0})^{p}+(\xi_{0}+\nu_{0}-2\delta \nu_{0})^{p} \\&<2(\xi_0+\nu_{0})^{p},
     	\end{aligned}$$
     	which means that $\widetilde C_\mathrm{NJ}^{p}(\xi,\nu,X)<\frac{(\xi+\nu)^{p}}{2^{p-2}(\xi^{p}+\nu^{p})}.$This is a contradiction. Then the proof is completed.

     \end{proof}
     \section{ The relations with the constant $\widetilde C_\mathrm{NJ}^{p}(\xi,\nu,X)$ and some geometric properties of Banach spaces}
     Before we give the geometric properties of Banach spaces exhibited by the constant $\widetilde C_\mathrm{NJ}^{p}(\xi,\nu, X)$, we introduce some relations with the constant $\widetilde C_\mathrm{NJ}^{p}(\xi,\nu, X)$ and $\delta_X{(\varepsilon)}$ and the connection between the constant $\widetilde C_\mathrm{NJ}^{p}(\xi,\nu, X)$ and $J(X)$. These linkages serve as a solid foundation for exploring the connection between the constant $\widetilde C_\mathrm{NJ}^{p}(\xi,\nu, X)$ and various geometric characteristics of Banach spaces.

     Now let us recall the definitions of the modulus of convexity  $\delta_X{(\varepsilon)}$  and the James constant $J(X)$ first.
     \begin{Definition}
     	Let $X$ be a Banach space, the definition of James constant $J(X) $\cite{11}  as shown below: $$J(X)=\sup\left\{\min\{\|x+y\|_X,\|x-y\|_X\}:x,y\in X,\|x\|_X=1,\|y\|_X=1\right\}.$$
     \end{Definition}
     \begin{Definition}
     	Clarkson introduced the convexity modulus $\delta_X{(\epsilon)}$ in \cite{16}  as follows: $$\delta_X(\epsilon)=\inf\left\{1-\frac{\|x+y\|}{2}:x,y\in S_X,\|x-y\|\geq\epsilon\right\}, (0\leq\epsilon\leq2),$$
     	and the convexity feature of a Banach space $X$ is described as the number
     	$$\epsilon_{0}(X)=\sup\big\{\epsilon\in[0,2]: \delta_{X}(\epsilon)=0\big\}.$$
     \end{Definition}
     Subsequently, we acquire the following assertion regarding the connection between the constant $\widetilde C_\mathrm{NJ}^{p}(\xi,\nu, X)$ and $\delta_X{(\epsilon)}$.
     \begin{Theorem}\label{section:Theorem}
     	Let $X$ be a Banach space. Then for any $0\leq\epsilon\leq2$.
     	$$\begin{aligned}
     		&\widetilde C_\mathrm{NJ}^{p}(\xi,\nu,X)\\&=\sup\left\{\frac{[2\min\{\xi,\nu\}(1-\delta_X{(\epsilon)}+|\xi-\nu|]^{p}+[\min\{\xi,\nu\}\epsilon+|\xi-\nu|]^p}{2^{p-1}(\xi^{p}+\nu^{p})}\right\}.
     	\end{aligned}
     	$$
     \end{Theorem}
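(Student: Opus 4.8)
The plan is to prove the two inequalities separately, after first reducing to the case $\xi\ge\nu$. Interchanging $\xi$ and $\nu$ and then replacing $y$ by $-y$ leaves $\|\xi x+\nu y\|^p+\|\nu x-\xi y\|^p$ unchanged, while the right-hand side is manifestly symmetric in $\xi,\nu$ (it depends only on $\min\{\xi,\nu\}$, $|\xi-\nu|$ and the symmetric factor $\xi^p+\nu^p$). Hence we may assume $\xi\ge\nu$, so that $\min\{\xi,\nu\}=\nu$ and $|\xi-\nu|=\xi-\nu$. Writing $R(\epsilon)=\frac{[2\nu(1-\delta_X(\epsilon))+(\xi-\nu)]^{p}+[\nu\epsilon+(\xi-\nu)]^{p}}{2^{p-1}(\xi^{p}+\nu^{p})}$, the goal is $\widetilde C_{\mathrm{NJ}}^p(\xi,\nu,X)=\sup_{0\le\epsilon\le2}R(\epsilon)$.

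For the upper bound, fix $x,y\in S_X$ and set $\epsilon=\|x-y\|$. Using the identities $\xi x+\nu y=\nu(x+y)+(\xi-\nu)x$ and $\nu x-\xi y=\nu(x-y)-(\xi-\nu)y$, the triangle inequality gives $\|\xi x+\nu y\|\le\nu\|x+y\|+(\xi-\nu)$ and $\|\nu x-\xi y\|\le\nu\|x-y\|+(\xi-\nu)$. Since the pair $(x,y)$ is admissible in the infimum defining $\delta_X(\epsilon)$, we have $\|x+y\|\le2(1-\delta_X(\epsilon))$, whereas $\|x-y\|=\epsilon$. As $t\mapsto t^p$ is increasing on $[0,\infty)$, substituting these bounds yields $\|\xi x+\nu y\|^p+\|\nu x-\xi y\|^p\le 2^{p-1}(\xi^p+\nu^p)R(\epsilon)$. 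Taking the supremum over all $x,y\in S_X$ then gives $\widetilde C_{\mathrm{NJ}}^p(\xi,\nu,X)\le\sup_{\epsilon}R(\epsilon)$.

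For the reverse inequality I would fix $\epsilon\in[0,2]$ and, for $\eta>0$, invoke the definition of $\delta_X(\epsilon)$ to select $x,y\in S_X$ with $\|x-y\|\ge\epsilon$ and $\|x+y\|>2(1-\delta_X(\epsilon))-\eta$. To recover the first bracket I would take $f\in X^{*}$, $\|f\|=1$, norming $x+y$, so that $\|\xi x+\nu y\|\ge f(\xi x+\nu y)=\nu\|x+y\|+(\xi-\nu)f(x)$; symmetrically, taking $g\in X^{*}$, $\|g\|=1$, norming $x-y$, one gets $\|\nu x-\xi y\|\ge g(\nu x-\xi y)=\nu\|x-y\|-(\xi-\nu)g(y)$. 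These reproduce the two bracketed quantities of $R(\epsilon)$ exactly provided $f(x)=1$ and $g(y)=-1$, i.e. provided the functional norming $x+y$ also norms $x$ and the functional norming $x-y$ also norms $-y$.

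The main obstacle is precisely this alignment: a generic near-extremal pair for $\delta_X(\epsilon)$ need not admit norming functionals with $f(x)=1$ and $g(y)=-1$, and the crude estimates $f(x)\ge\|x+y\|-1$ and $g(y)\le1-\|x-y\|$ fall short of the target by amounts proportional to $(\xi-\nu)$ (they become sharp only when $\xi=\nu$). I would resolve this by choosing the extremal pair more carefully, so that $x$ is a point at which $x+y$ is normed and, symmetrically, $-y$ is a point at which $x-y$ is normed — for instance by first fixing a smooth (exposed) unit vector together with its norming functional and then constructing the partner vector so as to realize both $\|x\pm y\|$, after which letting $\eta\to0$ makes both triangle inequalities asymptotically tight. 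Establishing the existence of such \emph{aligned} near-extremal pairs for every $\epsilon$ is the crux of the argument; granting it, letting $\eta\to0$ yields $\widetilde C_{\mathrm{NJ}}^p(\xi,\nu,X)\ge R(\epsilon)$ for each $\epsilon$, and taking the supremum over $\epsilon\in[0,2]$ completes the proof.
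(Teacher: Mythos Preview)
Your upper-bound argument is essentially the paper's: both reduce $\|\xi x+\nu y\|$ and $\|\nu x-\xi y\|$ to $\min\{\xi,\nu\}\|x\pm y\|+|\xi-\nu|$ via the triangle inequality and then invoke the definition of $\delta_X$.

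For the lower bound you have correctly isolated the real difficulty, and in fact the paper does not resolve it either. After choosing $x,y\in S_X$ with $\|x-y\|\ge\epsilon$ and $\|x+y\|\ge 2(1-\delta_X(\epsilon)-\lambda)$, the paper simply \emph{asserts} the displayed lower estimate for $\widetilde C_{\mathrm{NJ}}^{p}(\xi,\nu,X)$; in effect it is using the reverse inequalities $\|\xi x+\nu y\|\ge\min\{\xi,\nu\}\|x+y\|+|\xi-\nu|$ and $\|\nu x-\xi y\|\ge\min\{\xi,\nu\}\|x-y\|+|\xi-\nu|$, which are false in general. Your plan to manufacture ``aligned'' near-extremal pairs (with $f(x)=1$ and $g(y)=-1$) is more honest about what is needed, but it cannot succeed, because the stated equality is actually false whenever $\xi\ne\nu$.

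Take $X$ a Hilbert space, $p=2$, $\xi=2$, $\nu=1$. For all $x,y\in S_X$ one has $\|2x+y\|^2+\|x-2y\|^2=10$, so $\widetilde C_{\mathrm{NJ}}^{2}(2,1,X)=1$. On the other hand $1-\delta_X(\epsilon)=\sqrt{1-(\epsilon/2)^2}$, and at $\epsilon=\sqrt{2}$ the right-hand side equals
\[
\frac{(\sqrt{2}+1)^2+(\sqrt{2}+1)^2}{2\cdot 5}=\frac{3+2\sqrt{2}}{5}>1.
\]
Hence $\sup_{\epsilon}R(\epsilon)>\widetilde C_{\mathrm{NJ}}^{2}(2,1,X)$, and no choice of aligned pairs can deliver the reverse inequality. (When $\xi=\nu$ the alignment obstruction disappears and your argument goes through; the identity then reduces to the standard formula for $\widetilde C_{\mathrm{NJ}}^{(p)}(X)$ in terms of $\delta_X$.)
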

     \begin{proof}
     	According to $$\begin{cases}\|\xi x+\nu y\|\leq \xi\|x+y\|+|\xi-\nu |\\\|\xi x+\nu y\|\leq \eta\|x+y\|+|\xi-\nu|\end{cases},$$and$$\begin{cases}\|\nu x-\xi y\|\leq \nu\|x-y\|+|\xi-\nu |\\\|\nu x-\xi y\|\leq \xi\|x-y\|+|\xi-\nu|\end{cases},$$which means that
     	$$\|\xi x+\nu y\|\leq\min\{\xi,\nu\}\|x+y\|+|\xi-\nu|,$$and$$\|\nu x-\xi y\|\leq\min\{\xi,\nu\}\|x-y\|+|\xi-\nu|.$$Obviously, noticng that for any $x,y\in S_X$, $\delta_X(\|x-y\|)<1-\frac{\|x+y\|}{2}$ is always holds . \\\\Hence we obtain for any $0\leq\epsilon\leq2$,
     	$$\begin{aligned}&\frac{\|\xi x+\nu y\|^p+\|\nu x-\xi y\|^p}{2^{p-1}(\xi^p+\nu^p)}\\&\leq\frac{[\min\left\{\xi,\nu\right\}\|x+y\|+|\xi-\nu|]^{p}+[\min\{\xi,\nu\}\|x-y\|+|\xi-\nu|]^p}{2^{p-1}(\xi^{p}+\nu^{p})}\\&\leq\sup\left\{\frac{[2\min\{\xi,\nu\}(1-\delta_X{(\epsilon)}+|\xi-\nu|]^{p}+[\min\{\xi,\nu\}\epsilon+|\xi-\nu|]^p}{2^{p-1}(\xi^{p}+\nu^{p})}\right\}	.\end{aligned}$$
     	Therefore for any $0\leq\epsilon\leq2$	\begin{equation}\label{e4}\begin{aligned}
     			&\widetilde C_\mathrm{NJ}^{p}(\xi,\nu,X)\\&\leq\sup\left\{\frac{[2\min\{\xi,\nu\}(1-\delta_X{(\epsilon)}+|\xi-\nu|]^{p}+[\min\{\xi,\nu\}\epsilon+|\xi-\nu|]^p}{2^{p-1}(\xi^{p}+\nu^{p})}\right\}.
     		\end{aligned}
     	\end{equation}\\On the contrary, suppose $\epsilon\in[0,2]$. Then, for any $\lambda>0$, there can be found $x,y\in S_X$, such that
     	$$
     		\|x-y\|\geq\epsilon,\quad1-\frac{\|x+y\|}{2}\leq\delta_X(\epsilon)+\lambda.$$Then we have$$\widetilde C_\mathrm{NJ}^{p}(\xi,\nu,X)\geq\frac{[2\min\{\xi,\nu\}(1-\delta_X{(\epsilon)-\lambda)}+|\xi-\nu|]^{p}+[\min\{\xi,\nu\}\epsilon+|\xi-\nu|]^p}{2^{p-1}(\xi^{p}+\nu^{p})}.
$$
     	
     	Since $\lambda$ can be arbitrarily small, let $\lambda\to0$,we obtain
     	\begin{equation}\label{e5}
     		\widetilde C_\mathrm{NJ}^{p}(\xi,\nu,X)\geq\frac{[2\min\{\xi,\nu\}(1-\delta_X{(\epsilon))}+|\xi-\nu|]^{p}+[\min\{\xi,\nu\}\epsilon+|\xi-\nu|]^p}{2^{p-1}(\xi^{p}+\nu^{p})}.
     	\end{equation}
     	Combining the above inequality (\ref*{e4}) and (\ref*{e5}), we have completed the entire proof of Theorem \ref{section:Theorem}.
     \end{proof}
     
     \begin{Example}
     	Let $p>0,r\geq2.$ Then
     	
     	$$\widetilde C_{\mathrm{NJ}}^{(p)}(l_r)=\widetilde C_{\mathrm{NJ}}^{(p)}(L_r)=\begin{cases}\frac{[2^{1-\frac{1}{r}}\min\left\{\xi,\nu\right\}+|\xi-\nu|]^{p}}{2^{p-2}(\xi^{p}+\nu^{p})}&\quad p<r;\\\frac{(\xi+\nu)^{p}}{2^{p-1}(\xi^{p}+\nu^{p})}&\quad p\ge r.\end{cases}$$
     \end{Example}
     \begin{proof}
     	Since $\delta_{l_{r}}(\varepsilon)=\delta_{L_{r}}(\varepsilon)=1-\left[1-\left(\frac{\varepsilon}{2}\right)^{r}\right]^{\frac{1}{r}}$. By Theorem \ref{section:Theorem}, for any for any $0\leq\epsilon\leq2$, we have	$$
     	\begin{aligned}
     		\widetilde C_{\mathrm{NJ}}^{(p)}(l_r)&=\widetilde C_{\mathrm{NJ}}^{(p)}(L_r)\\&=\sup\left\{\frac{[2\min\{\xi,\nu\}\left[1-\left(\frac{\varepsilon}{2}\right)^{r}\right]^{\frac{1}{r}}+|\xi-\nu|]^{p}+[\min\{\xi,\nu\}\varepsilon+|\xi-\nu|]^p}{2^{p-1}(\xi^{p}+\nu^{p})}\right\}.
     	\end{aligned}
     	$$
     	
     	Now we define a function as
     	
     	$$f(\varepsilon)=[2\min\{\xi,\nu\}\left[1-\left(\frac{\varepsilon}{2}\right)^{r}\right]^{\frac{1}{r}}+|\xi-\nu|]^{p}+[\min\{\xi,\nu\}\varepsilon+|\xi-\nu|]^p.$$
     	Then by calculating the derivative of the function $f(\varepsilon)$, we obtain $f(\varepsilon)$ attains it's maximum at either $t_1=0$ or  $t'_1=2$ where $p\geq r$, and $f(\varepsilon)$ attains it's maximum at $t_2=2^\frac{1}{r}$  where $p<r$.\\Hence the conclusions of $\widetilde C_{\mathrm{NJ}}^{(p)}(l_r)$ and $\widetilde C_{\mathrm{NJ}}^{(p)}(L_r)$ hold.
     	
     \end{proof}

     \begin{Example}
     	$X$ is $c_0$ space: $c_0=\{(x_{i}):|x_{i}|\to0(i\to\infty)\}$ be furnished with the norm specified by $$\|x\|=\sup\limits_{1\leq i<\infty}\left\{|x_i|+\left(\sum\limits_{i=1}^\infty\frac{|x_i|^2}{4^i}\right)^{1/2} \right\},$$then, $$\widetilde C_\mathrm{NJ}^{p}(\xi,\nu,X)=\frac{|\xi-\nu|^{p}+(\xi+\nu)^{p}}{2^{p-1}(\xi^{p}+\nu^{p})}.$$
     \end{Example}
     \begin{proof}
     	Since $\delta_X(2)=1$ is known, we suppose $\xi\neq\nu$ without losing generality, and we obtain$$\begin{aligned}
     		\widetilde C_\mathrm{NJ}^{p}(\xi,\nu,X)&\leq\frac{[2\min\{\xi,\nu\}(1-\delta_X{(\varepsilon)}+|\xi-\nu|]^{p}+[\min\{\xi,\nu\}\varepsilon+|\xi-\nu|]^p}{2^{p-1}(\xi^{p}+\nu^{p})} \\
     		&\leq\frac{|\xi -\nu|^{p}+[2min\{\xi,\nu\}+|\xi-\nu|]^{p}}{2^{p-1}(\xi^{p}+\nu^{p})} \\
     		&=\frac{|\xi-\nu|^{p}+(\xi+\nu)^{p}}{2^{p-1}(\xi^{p}+\nu^{p})}.
     	\end{aligned}$$\\Hence by the upper bound of the $\widetilde C_\mathrm{NJ}^{p}(\xi,\nu,X)$, we obtain$$\widetilde C_\mathrm{NJ}^{p}(\xi,\nu,X)=\frac{|\xi-\nu|^{p}+(\xi+\nu)^{p}}{2^{p-1}(\xi^{p}+\nu^{p})}.$$
     	
     \end{proof}
     The following Theorem \ref{key} offers the connections between the constant $\widetilde C_\mathrm{NJ}^{p}(\xi,\nu,X)$ and $J(X)$. Nevertheless, prior to presenting the theorem, we first provide a lemma that will be required in the subsequent proof of Theorem \ref{key}.
     
     \begin{Lemma}\label{l3.1}
     	Let $x>0$, $y>0$, then $(x+y)^p\leq2^{p-1}(x^p+y^p)$ where $1\leq p<+\infty$.
     \end{Lemma}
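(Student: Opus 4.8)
The plan is to reduce the claim to a single application of convexity of the map $\phi(t)=t^p$ on $[0,+\infty)$, which is exactly the kind of argument already used in the proof of the opening Proposition (with $h(x)=\|x\|^p$). Since both sides of the asserted inequality are positively homogeneous of degree $p$ in the pair $(x,y)$, the statement is at heart a midpoint convexity estimate, so no optimisation over a parameter is really needed.

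First I would record that, because $1\leq p<+\infty$, the function $\phi(t)=t^p$ is convex on $[0,+\infty)$: for $p>1$ this follows from $\phi''(t)=p(p-1)t^{p-2}\geq 0$, while for $p=1$ the map is affine (hence convex) and the claim collapses to the trivial identity $x+y=x+y$. Applying the definition of convexity to the points $x,y>0$ with equal weights $\tfrac12,\tfrac12$ gives
$$\left(\frac{x+y}{2}\right)^p=\phi\!\left(\frac{x+y}{2}\right)\leq\frac{\phi(x)+\phi(y)}{2}=\frac{x^p+y^p}{2}.$$
Multiplying both sides by $2^p$ then yields $(x+y)^p\leq 2^{p-1}(x^p+y^p)$, which is precisely the stated conclusion.

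I do not anticipate any genuine obstacle here; this is essentially a one-line consequence of convexity. The only points worth checking are that the hypothesis $p\geq 1$ is exactly what secures convexity of $t^p$ (for $0<p<1$ the inequality would reverse), and that equality holds precisely when $x=y$, matching the midpoint case of Jensen's inequality. An equally short alternative would be to normalise $x+y=1$ by homogeneity and minimise $x^p+y^p$ over the segment, the minimum being attained at $x=y=\tfrac12$ and giving $2^{p-1}\cdot 2^{1-p}=1$; however, the direct convexity argument is cleaner and keeps the proof consistent with the technique used earlier in the paper.
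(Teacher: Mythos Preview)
Your proof is correct: the midpoint convexity of $t\mapsto t^{p}$ for $p\geq 1$ immediately gives $\bigl(\tfrac{x+y}{2}\bigr)^{p}\leq\tfrac{x^{p}+y^{p}}{2}$, and multiplying by $2^{p}$ finishes. The paper itself states this lemma without proof, treating it as a standard fact, so there is nothing to compare against; your convexity argument is the natural justification and is consistent with the use of $h(x)=\|x\|^{p}$ elsewhere in the paper.
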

     \begin{Theorem}\label{key}
     	Let $X$ be a Banach space. Then,$$\frac{[\max\{\xi,\nu\}]J(X)-|\xi-\nu|]^{p}}{2^{p-2}(\xi^{p}+\nu^{p})}\leq\widetilde{C}_\mathrm{NJ}^{p}(\xi,\nu,X)\leq1+\frac{[\min\{\xi,\nu\}J(X)+|\xi-\nu|]^{p}}{2^{p-1}(\xi^{p}+\nu^{p})}.$$
     \end{Theorem}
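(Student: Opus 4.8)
The plan is to prove the two inequalities separately, both resting on the splitting estimates already used in the proof of Theorem~\ref{section:Theorem},
$$\|\xi x+\nu y\|\le \min\{\xi,\nu\}\|x+y\|+|\xi-\nu|, \qquad \|\nu x-\xi y\|\le \min\{\xi,\nu\}\|x-y\|+|\xi-\nu|,$$
together with their reverse-triangle analogues
$$\|\xi x+\nu y\|\ge \max\{\xi,\nu\}\|x+y\|-|\xi-\nu|, \qquad \|\nu x-\xi y\|\ge \max\{\xi,\nu\}\|x-y\|-|\xi-\nu|,$$
which I would obtain from the same decompositions (for instance $\xi x+\nu y=\max\{\xi,\nu\}(x+y)-|\xi-\nu|\,z$ with $z\in\{x,y\}$ chosen according to whether $\xi\ge\nu$) and the reverse triangle inequality.

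For the upper estimate I would argue as follows. By the definition of $J(X)$, every pair $x,y\in S_X$ satisfies $\min\{\|x+y\|,\|x-y\|\}\le J(X)$, so at least one of $\|x+y\|,\|x-y\|$ does not exceed $J(X)$; assume without loss of generality that $\|x-y\|\le J(X)$, the other case being symmetric. Then $\|\xi x+\nu y\|\le \xi+\nu$ by the ordinary triangle inequality, while $\|\nu x-\xi y\|\le \min\{\xi,\nu\}J(X)+|\xi-\nu|$ by the first group of estimates. Adding the $p$-th powers gives
$$\|\xi x+\nu y\|^{p}+\|\nu x-\xi y\|^{p}\le (\xi+\nu)^{p}+[\min\{\xi,\nu\}J(X)+|\xi-\nu|]^{p}.$$
Applying Lemma~\ref{l3.1} with the substitution $\xi,\nu$ to replace $(\xi+\nu)^{p}$ by $2^{p-1}(\xi^{p}+\nu^{p})$, dividing by $2^{p-1}(\xi^{p}+\nu^{p})$, and passing to the supremum over $S_X\times S_X$ produces the right-hand inequality.

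For the lower estimate I would use that $J(X)$ is a supremum: given $\epsilon>0$ there are $x,y\in S_X$ with $\min\{\|x+y\|,\|x-y\|\}>J(X)-\epsilon$, hence both $\|x+y\|$ and $\|x-y\|$ exceed $J(X)-\epsilon$. Substituting into the reverse-triangle estimates yields
$$\|\xi x+\nu y\|^{p}+\|\nu x-\xi y\|^{p}\ge 2\,[\max\{\xi,\nu\}(J(X)-\epsilon)-|\xi-\nu|]^{p},$$
so the quotient in the definition of $\widetilde C_\mathrm{NJ}^{p}(\xi,\nu,X)$ is at least $[\max\{\xi,\nu\}(J(X)-\epsilon)-|\xi-\nu|]^{p}/\bigl(2^{p-2}(\xi^{p}+\nu^{p})\bigr)$. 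Letting $\epsilon\to0$ gives the left-hand inequality.

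The main obstacle I anticipate is the case bookkeeping in the splitting identities: one must verify that the coefficient $\max\{\xi,\nu\}$ (for the lower bound) and $\min\{\xi,\nu\}$ (for the upper bound) genuinely arise from the correct decomposition in both regimes $\xi\ge\nu$ and $\xi<\nu$. A secondary point, needed to justify raising to the $p$-th power in the lower bound, is that $\max\{\xi,\nu\}(J(X)-\epsilon)-|\xi-\nu|$ is nonnegative for small $\epsilon$; this holds because $J(X)\ge\sqrt2$ for every Banach space of dimension at least $2$, so that $\max\{\xi,\nu\}J(X)\ge\sqrt2\,\max\{\xi,\nu\}>\max\{\xi,\nu\}-\min\{\xi,\nu\}=|\xi-\nu|$.
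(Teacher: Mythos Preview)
Your proposal is correct and follows essentially the same approach as the paper's proof: both halves rest on the identical splitting estimates $\|\xi x+\nu y\|\lessgtr\{\min/\max\}\{\xi,\nu\}\|x+y\|\pm|\xi-\nu|$ (and their $\|\nu x-\xi y\|$ counterparts) together with Lemma~\ref{l3.1}. The only differences are cosmetic---for the upper bound the paper phrases the argument via $\min\{\|\xi x+\nu y\|,\|\nu x-\xi y\|\}$ rather than your WLOG on $\|x-y\|\le J(X)$, and for the lower bound the paper runs the chain for arbitrary $x,y$ and then takes the supremum, whereas you pick $x,y$ nearly realizing $J(X)$; the underlying inequalities and logic are the same.
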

     
     \begin{proof}
     	We prove the left inequality first.
     	
     	Since 
     	$$\begin{aligned}
     		2[\min\{\|\xi x+\nu y\|,\|\nu x-\xi y\|\}]^{p}& \leq\|\xi x+\nu y\|^p+\|\nu x-\xi y\|^p 
     		\\&\leq2^{p-1}(\xi^p+\nu^p)\widetilde C_{NJ}^p(\xi,\nu,X),
     	\end{aligned}$$
     	we have$$
     	\left[2^{p-2}\left(\xi^{p}+\nu^{p}\right)\widetilde C_{NJ}^{p}(\xi,\nu,X)\right]^{\frac{1}{p}}\geq\min\{\|\xi x+\nu y\|,\|\nu x-\xi y\|.$$
     	According to
     	$$\|\xi x+\nu y\|\geq\max\{\xi,\nu\}\|x+y\|-|\xi-\nu|,$$and
     	$$\|\nu x-\xi y\|\geq\max\left\{\xi,\nu\right\}\|x-y\|-|\xi-\nu|,$$
     	we obtain$$\begin{aligned}&\left[2^{p-2}\left(\xi^{p}+\nu^{p}\right)\widetilde C_\mathrm{NJ}^{p}(\xi,\nu,X)\right]^\frac{1}{p}\\&\geq \min\{\max\{\xi,\nu\}\|x+y\|-|\xi-\nu|,\max\{\xi,\nu\}\|x-y\|-|\xi-\nu|\}\\&=\max\{\xi,\nu\}J(X)-|\xi-\nu|,\end{aligned}$$
     	which means that $\widetilde{C}_\mathrm{NJ}^{p}(\xi,\nu,X)\geq\frac{[\max\{\xi,\nu\}]J(X)-|\xi-\nu|]^{p}}{2^{p-2}(\xi^{p}+\nu^{p})}.$
     	
     	On the other hand, according to the convex function $h(x)=\|x\|^p$, we have$$\frac{\|\xi x+\nu y\|^p}{(\xi+\nu)^p}=\bigg\|\frac{\xi}{\xi+\nu}x+\frac{\eta}{\xi+\nu}y\bigg\|^p\leq\frac{\xi}{\xi+\nu}\|x\|^p+\frac{\nu}{\xi+\nu}\|y\|^p=1,$$
     	and$$\frac{\|\nu x-\xi y\|^p}{(\xi+\nu)^p}=\bigg\|\frac{\nu}{\xi+\nu}x+\frac{\xi}{\xi+\nu}(-y)\bigg\|^p\leq\frac{\xi}{\xi+\nu}\|x\|^p+\frac{\nu}{\xi+\nu}\|-y\|^p=1
     	,$$
     	which means that $\|\xi x+\nu y\|^p\leq(\xi+\nu)^p
     	$ and $\|\nu x-\xi y\|^p\leq(\xi+\nu)^p
     	$.
     	
     	By Lemma \ref{l3.1}, we obtain that \begin{equation}\label{e6}
     		\frac{\|\xi x+\nu y\|^{p}}{2^{p-1}(\xi^{p}+\nu^{p})}\leq\frac{(\xi +\nu )^{p}}{2^{p-1}(\xi^{p}+\nu^{p})}\leq\frac{2^{p-1}(\xi^{p}+\nu^{p})}{2^{p-1}(\xi^{p}+\nu^{p})}\leq1,
     	\end{equation}and\begin{equation}\label{e7}
     		\frac{\|\nu x-\xi y\|^{p}}{2^{p-1}(\xi^{p}+\nu^{p})}\leq\frac{(\xi +\nu )^{p}}{2^{p-1}(\xi^{p}+\nu^{p})}\leq\frac{2^{p-1}(\xi^{p}+\nu^{p})}{2^{p-1}(\xi^{p}+\nu^{p})}\leq1
     		.
     	\end{equation}	
     	
     	Then since \begin{equation}\label{e8}
     		\|\xi x+\nu y\|\leq\min\{\xi,\nu\}\|x+y\|+|\xi-\nu|,
     	\end{equation}and\begin{equation}\label{e9}
     		\|\nu x-\xi y\|\leq\min\{\xi,\nu\}\|x-y\|+|\xi-\nu|.
     	\end{equation} Therefore, combining the above inequality (\ref*{e6}), (\ref*{e7}), (\ref*{e8}) and (\ref*{e9}), we have$$\begin{aligned}
     		&\frac{\|\xi x+\nu y\|^{p}+\|\nu x-\xi y\|^{p}}{2^{p-1}(\xi^{p}+\nu^{p})}\\& \leq1+\frac{[\min\{\|\xi x+\nu y\|,\|\nu x-\xi y\|\}]^{p}}{2^{p-1}(\xi^{p}+\nu^{p})} \\
     		&\leq1+\frac{[\min\{\min\{\xi,\nu\}\|x+y\|+|\xi-\nu|,\min\{\xi,\nu\}\|x-y\|+|\xi-\nu|]^{p}}{2^{p-1}(\xi^{p}+\nu^{p})} \\
     		&=1+\frac{[\min\{\xi,\nu\}]J(X)+|\xi-\nu|]^{p}}{2^{p-1}(\xi^{p}+\nu^{p})},
     	\end{aligned}$$ which means that $\widetilde{C}_\mathrm{NJ}^{p}(\xi,\nu,X)\leq1+\frac{[\min\{\xi,\nu\}J(X)+|\xi-\nu|]^{p}}{2^{p-1}(\xi^{p}+\nu^{p})}.$
     	
     	In conclusion, we completed the proof of  Theorem \ref{key}.
     \end{proof}
     
     The following Theorem \ref{section:5.3} and Corollary \ref{section:6.1} provide an analogous condition that a Banach space $X$ has the property of being uniformly non-square. Particularly, the Banach space $X$ is known as uniformly non square \cite{11} if there is $\delta\in(0,1)$ such that for any $x,y\in S_X$, either $\frac{\|x+y\|}{2}\leq1-\delta$ or $\frac{\|x-y\|}{2}\leq1-\delta.$ 
     \begin{Theorem}\label{section:5.3}
     	X is uniformly non-square $\Leftrightarrow$  $\widetilde C_\mathrm{NJ}^{p}(\xi,\nu,X)<\frac{(\xi+\nu)^{p}}{2^{p-2}(\xi^{p}+\nu^{p})}.$		
     \end{Theorem}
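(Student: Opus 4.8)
The plan is to route everything through the non-skew constant $\widetilde{C}_\mathrm{NJ}^{(p)}(X)$ and then invoke Theorem~\ref{4.3}, which already packages the saturation behaviour we need. The bridge is the elementary equivalence
\[
X \text{ is uniformly non-square} \iff \widetilde{C}_\mathrm{NJ}^{(p)}(X) < 2,
\]
after which the statement follows by taking contrapositives in the chain $(i)\iff(ii)\iff(iii)$ of Theorem~\ref{4.3}, combined with the upper bound of Proposition~1. Note in particular that the theorem is phrased for a single fixed pair $\xi,\nu$, and this is legitimate precisely because the equivalence $(ii)\iff(iii)$ of Theorem~\ref{4.3} shows that strict inequality for one pair is equivalent to strict inequality for all pairs.

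First I would establish the bridging equivalence. For the forward direction, if $X$ is uniformly non-square there is $\delta\in(0,1)$ with $\min\{\|x+y\|,\|x-y\|\}\le 2(1-\delta)$ for all $x,y\in S_X$; since also $\max\{\|x+y\|,\|x-y\|\}\le 2$, one gets $\|x+y\|^p+\|x-y\|^p\le 2^p[1+(1-\delta)^p]$, whence $\widetilde{C}_\mathrm{NJ}^{(p)}(X)\le 1+(1-\delta)^p<2$. For the converse I would argue contrapositively: if $X$ fails to be uniformly non-square, then for each $n$ there exist $x_n,y_n\in S_X$ with $\|x_n+y_n\|>2(1-\tfrac1n)$ and $\|x_n-y_n\|>2(1-\tfrac1n)$, so that $\frac{\|x_n+y_n\|^p+\|x_n-y_n\|^p}{2^p}\to 2$ and therefore $\widetilde{C}_\mathrm{NJ}^{(p)}(X)=2$.

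With this in hand the theorem is essentially pure logic. By Proposition~1 we always have $\widetilde{C}_\mathrm{NJ}^{p}(\xi,\nu,X)\le \frac{(\xi+\nu)^p}{2^{p-2}(\xi^p+\nu^p)}$, so the inequality $\widetilde{C}_\mathrm{NJ}^{p}(\xi,\nu,X)<\frac{(\xi+\nu)^p}{2^{p-2}(\xi^p+\nu^p)}$ is exactly the negation of condition $(iii)$ (equivalently $(ii)$) of Theorem~\ref{4.3}. Taking contrapositives, this is equivalent to the negation of $(i)$, that is, to $\widetilde{C}_\mathrm{NJ}^{(p)}(X)<2$ (using $\widetilde{C}_\mathrm{NJ}^{(p)}(X)\le 2$), which by the bridging equivalence is exactly uniform non-squareness of $X$. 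Chaining these equivalences closes the proof.

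I do not expect a genuine obstacle here, since each link is short; the only points requiring care are the sequence argument establishing $\widetilde{C}_\mathrm{NJ}^{(p)}(X)=2$ when uniform non-squareness fails (the supremum being realized only in the limit), and quoting Theorem~\ref{4.3} with the correct quantifiers so that the skew part is borrowed rather than reproved. If one preferred to avoid Theorem~\ref{4.3} entirely, the same two sequence-versus-$\delta$ computations used in its proof can be inserted directly, but leaning on Theorem~\ref{4.3} keeps the argument cleanest.
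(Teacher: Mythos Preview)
Your argument is correct. The bridging equivalence $X$ uniformly non-square $\iff \widetilde{C}_\mathrm{NJ}^{(p)}(X)<2$ is established cleanly, and once that is in place the contrapositive of Theorem~\ref{4.3} together with Proposition~1 closes the loop exactly as you describe; the quantifier issue (some versus all $\xi,\nu$) is handled by the $(ii)\iff(iii)$ part of Theorem~\ref{4.3}, as you note.

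The paper itself does not give a proof in any detail: it simply asserts that the result is immediate from the definition of uniform non-squareness. The implicit direct argument would run the same $\delta$-bound and sequence computations that already appear inside the proof of Theorem~\ref{4.3} (the inequalities $\|\xi x+\nu y\|\le \min\{\xi,\nu\}\|x+y\|+|\xi-\nu|$ and $\|\xi x+\nu y\|\ge \max\{\xi,\nu\}\|x+y\|-|\xi-\nu|$), applied directly to $\widetilde{C}_\mathrm{NJ}^{p}(\xi,\nu,X)$ rather than to $\widetilde{C}_\mathrm{NJ}^{(p)}(X)$. Your route is different in that you factor through the non-skew constant and invoke Theorem~\ref{4.3} as a black box; this is tidier within the paper's architecture, since it reuses work already done rather than repeating those estimates, and it makes explicit that the result for one pair $(\xi,\nu)$ automatically gives it for all. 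The direct route the paper gestures at would be marginally more self-contained but would duplicate computations.
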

     \begin{proof}
     	It is immediately demonstrable by the definition of uniformly non-squared presented in the article above.
     \end{proof}

     It's well known that $X$ is uniformly non-square $\Leftrightarrow$  $J(X)<2$, then we have a simple corollary by the above Theorem \ref{key} and Theorem \ref{section:5.3}.
     \begin{Corollary}\label{section:6.1}
     	Let $X$ be a Banach space. Then for any $1\leq p<+ \infty $ , the following conclusions are equivalent:
     	
     	\qquad $(i)$~~$X$ is uniformly non-square.
     	
     	\qquad $(ii)$~~$J(X)<2.$
     	
     	\qquad $(iii)$~~$\widetilde C_\mathrm{NJ}^{p}(\xi,\nu,X)<\frac{(\xi+\nu)^{p}}{2^{p-2}(\xi^{p}+\nu^{p})}.$
     	
     \end{Corollary}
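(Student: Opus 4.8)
The plan is to assemble the three-way equivalence out of two links that are already in hand, rather than to verify each implication from scratch. The equivalence $(i)\Leftrightarrow(iii)$ is nothing more than Theorem \ref{section:5.3}, proved immediately above, so that link costs nothing. The equivalence $(i)\Leftrightarrow(ii)$ is the classical James characterization of uniform non-squareness recalled just before the statement; for completeness I would spell it out from the definitions. If $X$ is uniformly non-square there is $\delta\in(0,1)$ with $\min\{\|x+y\|,\|x-y\|\}\le 2(1-\delta)$ for all $x,y\in S_X$, and taking the supremum gives $J(X)\le 2(1-\delta)<2$; conversely, if $J(X)=c<2$ then for every $x,y\in S_X$ one of $\|x+y\|,\|x-y\|$ is at most $c$, which is exactly uniform non-squareness with $\delta=1-\tfrac{c}{2}$. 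Chaining the two equivalences yields $(i)\Leftrightarrow(ii)\Leftrightarrow(iii)$, which is the assertion.

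To make genuine use of Theorem \ref{key}, I would instead derive $(iii)\Rightarrow(ii)$ directly from its left-hand bound. If $\widetilde C_\mathrm{NJ}^{p}(\xi,\nu,X)<\frac{(\xi+\nu)^{p}}{2^{p-2}(\xi^{p}+\nu^{p})}$, then comparing with $\frac{[\max\{\xi,\nu\}J(X)-|\xi-\nu|]^{p}}{2^{p-2}(\xi^{p}+\nu^{p})}\le\widetilde C_\mathrm{NJ}^{p}(\xi,\nu,X)$ gives $[\max\{\xi,\nu\}J(X)-|\xi-\nu|]^{p}<(\xi+\nu)^{p}$. Since $s\mapsto s^{1/p}$ is increasing on $[0,\infty)$ (the inequality being trivial when the left base is negative, as then $J(X)<\frac{|\xi-\nu|}{\max\{\xi,\nu\}}<1$), taking $p$-th roots yields $\max\{\xi,\nu\}J(X)<\xi+\nu+|\xi-\nu|=2\max\{\xi,\nu\}$, hence $J(X)<2$. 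The reverse implication $(ii)\Rightarrow(iii)$ I would route through $(i)$ via Theorem \ref{section:5.3}, not through the upper bound of Theorem \ref{key}.

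The one genuine pitfall is precisely this last point. One is tempted to obtain $(ii)\Rightarrow(iii)$ directly from the upper estimate $\widetilde C_\mathrm{NJ}^{p}(\xi,\nu,X)\le 1+\frac{[\min\{\xi,\nu\}J(X)+|\xi-\nu|]^{p}}{2^{p-1}(\xi^{p}+\nu^{p})}$ of Theorem \ref{key}, but this does not work. Even at $J(X)=2$ the right-hand side equals $1+\frac{(\xi+\nu)^{p}}{2^{p-1}(\xi^{p}+\nu^{p})}$, and writing $a=\frac{(\xi+\nu)^{p}}{2^{p-1}(\xi^{p}+\nu^{p})}\le 1$ (Lemma \ref{l3.1}), this bound $1+a$ is already at least the target value $2a=\frac{(\xi+\nu)^{p}}{2^{p-2}(\xi^{p}+\nu^{p})}$, with equality only when $\xi=\nu$. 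Thus the upper bound of Theorem \ref{key} is too loose to force the strict inequality required in $(iii)$, and $(ii)\Rightarrow(iii)$ must instead be obtained through the sharp threshold encoded in Theorem \ref{section:5.3}. Everything else in the argument is a direct quotation of the two preceding results, so this is the only step demanding care.
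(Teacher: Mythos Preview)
Your proposal is correct and follows essentially the same route as the paper, which merely states that the corollary follows from the well-known equivalence $(i)\Leftrightarrow(ii)$ together with Theorem~\ref{section:5.3} and Theorem~\ref{key}. You supply considerably more detail than the paper does, including the explicit derivation of $(iii)\Rightarrow(ii)$ from the lower bound in Theorem~\ref{key} and the useful observation that the upper bound there is too weak to yield $(ii)\Rightarrow(iii)$ directly when $\xi\neq\nu$.
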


     \section{ The constant $\widetilde C_\mathrm{NJ}^{p}(\xi,\nu,X)$ and uniform normal structure}
     It is known that in addition to the condition mentioned in the previous section, many geometric properties of Banach spaces are related to the fixed point property (FPP) \cite{14}, and among them, normal structure \cite{15} is one of the most widely studied properties.\\In addition to its close connection with the fixed point property, the normal structure itself has significant research value. However, it is not easy to test whether a given Banach space has a (weak) normal structure. For this reason, scholars have given sufficient conditions for this property from multiple perspectives. These include using constants such as $J(X)$ and $C_\mathrm{N J}(X)$, as well as considering aspects such as the modulus of convexity and smoothness.\\In this section, 
     presenting a sufficient condition for normal structure based on the constant $\widetilde C_\mathrm{NJ}^{p}(\xi,\nu, X)$ is a crucial step in the following Theorem \ref{6.1}. This offers a fresh perspective on the structure and characteristics of Banach spaces.
     
     \begin{Definition}
     	If K is a closed, bounded, and convex subset of a Banach space X, then X has a normal structure if and only if  $r(K)<\dim(K)$ for each K. Among them, $$\operatorname{diam}(K):=\sup\{\|x-y\|:x,y\in K\}$$ denotes the diameter,$$r(K):=\inf\{\sup\{\|x-y\|:y\in K\}:x\in K\}$$ represents the Chebyshev radius, respectively.
     \end{Definition}
     If every weakly compact convex set $K$ of Banach space $X$ that contains more than one point has a normal structure, then Banach space $X$ is said to have a weak normal structure. Normal structure and weak normal structure are obviously the same in a reflexive \cite{11} Banach space. Furthermore, the fixed point property is present in all reflexive Banach spaces with normal structure.
     The constant $\widetilde C_\mathrm{NJ}^{p}(\xi,\eta,X)$ and uniform normal structure will be connected in the following Theorem \ref{6.1}. 
     \begin{Theorem}\label{6.1}
     	Let $X$ be a Banach space, if $\widetilde C_\mathrm{NJ}^{p}(\xi,\nu,X)<\frac{(\xi+\nu)^{p}+\nu^p}{2^{p-1}(\xi^{p}+\nu^{p})}$ holds for some $\xi,\nu>0$, then$X$ has normal structure.
     \end{Theorem}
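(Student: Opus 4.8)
The plan is to argue by contradiction and to reduce the whole problem to \emph{weak} normal structure. First I would note that the hypothesis already forces reflexivity: since $\nu^p<(\xi+\nu)^p$ we have $\frac{(\xi+\nu)^p+\nu^p}{2^{p-1}(\xi^p+\nu^p)}<\frac{(\xi+\nu)^p}{2^{p-2}(\xi^p+\nu^p)}$, so the assumption yields $\widetilde C_\mathrm{NJ}^p(\xi,\nu,X)<\frac{(\xi+\nu)^p}{2^{p-2}(\xi^p+\nu^p)}$, whence by Theorem~\ref{section:5.3} the space $X$ is uniformly non-square. Uniform non-squareness implies super-reflexivity, so $X$ is reflexive, and for reflexive spaces normal structure and weak normal structure coincide; hence it suffices to prove that $X$ has weak normal structure.

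Suppose it does not. Then I would invoke the standard characterization of the failure of weak normal structure: there is a weakly null sequence $\{x_n\}\subseteq S_X$ that is diametral, i.e. (after normalizing the diameter to $1$) $\|x_n-x_m\|\to 1$ as $m,n\to\infty$ with $m\neq n$. The strategy is to feed a pair $(x,y)\in S_X\times S_X$ manufactured from this sequence into the definition of $\widetilde C_\mathrm{NJ}^p(\xi,\nu,X)$ and to push the resulting quotient up to $\frac{(\xi+\nu)^p+\nu^p}{2^{p-1}(\xi^p+\nu^p)}$, contradicting the hypothesis.

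Concretely, the target is a pair with $\|\xi x+\nu y\|\to\xi+\nu$ and $\|\nu x-\xi y\|\geq\nu-o(1)$, because then the quotient tends to the forbidden value. The first requirement is equivalent to $\|x+y\|\to 2$, and this I can achieve cleanly: taking $x=x_n$ and $y=y_m:=\frac{x_n-x_m}{\|x_n-x_m\|}\in S_X$ and letting $m\to\infty$, the weak nullity of $\{x_m\}$ and $\|x_n-x_m\|\to1$ give $y_m\rightharpoonup x_n$, so $\xi x_n+\nu y_m\rightharpoonup(\xi+\nu)x_n$; weak lower semicontinuity of the norm combined with the trivial upper bound $\|\xi x+\nu y\|\leq\xi+\nu$ on $S_X\times S_X$ then forces $\|\xi x_n+\nu y_m\|\to\xi+\nu$. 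The second requirement asks for a norm-one functional $g\in X^{*}$ with $g(x)\approx 1$ and $g(y)\approx 0$, for then $\|\nu x-\xi y\|\geq \nu g(x)-\xi g(y)\approx\nu$.

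The main obstacle is that these two demands pull in opposite directions: $\|x+y\|\to 2$ \emph{aligns} $x$ and $y$ (they share a supporting functional $f$ with $f(x)=f(y)=1$), whereas the estimate on $\|\nu x-\xi y\|$ needs a functional $g$ \emph{separating} them — indeed for the difference pair above one only gets $\nu x_n-\xi y_m\rightharpoonup(\nu-\xi)x_n$, which is not enough. Reconciling alignment with separation is the heart of the matter, and I expect it to hinge on the sharper form of the failure-of-weak-normal-structure lemma, producing a normalized weakly null diametral sequence together with biorthogonal-type supporting functionals, from which one can select $x,y,f,g$ meeting both conditions up to $o(1)$. Reflexivity, secured in the first step, is exactly what makes these functionals available through weak and weak${}^{*}$ compactness. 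Finally, since $\widetilde C_\mathrm{NJ}^p(\xi,\nu,X)$ is symmetric in $\xi$ and $\nu$, I would assume a convenient ordering of $\xi,\nu$ to streamline the functional bookkeeping and to match the asymmetric $\nu^p$ appearing on the right-hand side.
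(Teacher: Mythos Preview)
Your overall architecture is exactly that of the paper: use the hypothesis to force uniform non-squareness and hence reflexivity, reduce to weak normal structure, assume it fails, pull a weakly null diametral sequence out of the Gao--Lau lemma, and manufacture from it a pair $(x,y)\in S_X\times S_X$ pushing the quotient up to $\frac{(\xi+\nu)^p+\nu^p}{2^{p-1}(\xi^p+\nu^p)}$. Your first estimate $\|\xi x+\nu y\|\to\xi+\nu$ via weak lower semicontinuity is a clean variant of the paper's supporting-functional computation.

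The genuine gap is the second estimate, and the route you propose for it will not work. A functional $g$ with $g(x_n)\approx 1$ and $g(y_m)\approx 0$ would have to satisfy $g(x_n)\approx g(x_m)\approx 1$; but $x_m\rightharpoonup 0$, so $g(x_m)\to 0$ for every fixed $g$, and no ``biorthogonal-type'' refinement of the diametral lemma rescues this. The paper does \emph{not} use a separating functional for this term. Instead it takes the pair in the other order, $x'=\dfrac{s_1-s_{n_0}}{\|s_1-s_{n_0}\|}$ and $y'=s_1$ (equivalently, your $(x,y)$ with the roles swapped, which you may absorb into the ordering of $\xi,\nu$), and observes that with $\xi\le\nu$
\[
\nu x'-\xi y'\ \approx\ -\nu\Bigl[s_{n_0}-\bigl(1-\tfrac{\xi}{\nu}\bigr)s_1\Bigr].
\]
Since $0\in\overline{\mathrm{co}}\{s_k\}$, one can pick $y\in\mathrm{co}\{s_k\}_{k\le n_0}$ with $\|y\|<\delta$, so that $\bigl(1-\tfrac{\xi}{\nu}\bigr)s_1+\tfrac{\xi}{\nu}y$ is a genuine convex combination of earlier terms; the Gao--Lau inequality $\|s_{n_0}-s\|>1-\delta$ for $s\in\mathrm{co}\{s_k\}_{k<n_0}$ then gives $\|\nu x'-\xi y'\|>\nu(1-3\delta)$ directly, with no functional needed. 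So the missing idea is: the lower bound on the ``separated'' term comes from the diametral property itself (together with $0$ lying in the closed convex hull), not from duality; and your pair has $x$ and $y$ in the wrong slots for this to fire.
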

     \begin{proof}
     	First, what can be seen is that there is an equivalent definition of the constant $\widetilde C_\mathrm{NJ}^{p}(\xi,\nu, X)$ which is described as$$\widetilde C_\mathrm{N J}^p(\xi,\nu, X)=\sup \left\{\frac{\xi^p\|\ x+\frac{\nu}{\xi} y\|^p+\nu^p\|x-
     		\frac{\xi}{\nu} y\|^p}{2^{p-1}(\xi^p+\nu^p)}: x,y \in S_X\right\},$$where $1\leq p<+\infty$.
     	
     	Without loss of generality, we assume that $\xi\leq\nu$ (the case $\nu\geq\xi$ is similar). If $X$ does not have weak normal structure, by \cite{17}, for any $0<\delta<1$, there exists $s_n$ in $S_X$ with $s_n\xrightarrow{w}0$ and
     	
     	$$1-\delta<\|s_{n+1}-s\|<1+\delta $$
     	for sufficiently large $n$ and for any $s\in\operatorname{co}\{s_k\}_{k=1}^n.$
     	
     	Given that 0 is a member of the weakly closed convex hull $\{s_n\}$, which equals to the norm closed convex hull $\overline{\text{co}}\{s_n\}_{n=1}^{\infty}$, we can take $n_0\in\mathbb{N},y\in$co$\{s_n\}_{n=1}^{n_0}$ and $s^*\in\nabla_{s_1}(\nabla_{s_1})$ be supporting functional of $s_1\in S_X$, such that$$\|y\|<\delta,\quad\left|s^*\left(s_{n_0}\right)\right|<\delta,\quad1-\delta<\left\|s_{n_0}-s_1\right\|<1+\delta.$$
     	Then, for any $\frac{\xi}{\nu}\in[0,1],$
     	$$\left\|s_{n_0}-(1-\frac{\xi}{\nu})s_1\right\|\geq\left\|s_{n_0}-(1-\frac{\xi}{\nu})s_1-\frac{\xi}{\nu}y\right\|-\frac{\xi}{\nu}\|y\|>1-2\delta.$$
     	Let $x_{1}={\frac{s_{1}-s_{n_{0}}}{\|s_{1}-s_{n_{0}}\|}}, y_{1}=s_{1}$, we have$$\begin{aligned}\left\|x'-\frac{\xi}{\nu}y'\right\|& =\left\|\left(1-\frac{\xi}{\nu}\right)s_{1}-s_{n_{0}}+x'-\left(s_{1}-s_{n_{0}}\right)\right\| \\
     		&\geq\left\|s_{n_0}-\left(1-\frac{\xi}{\nu}\right)s_1\right\|-\left\|x'-\left(s_1-s_{n_0}\right)\right\| \\
     		&>1-2\delta-\begin{vmatrix}1-\begin{Vmatrix}s_1-s_{n_0}\end{Vmatrix}\end{vmatrix} \\
     		&>1-3\delta.
     	\end{aligned}$$

     	On the other hand,$$\begin{aligned}
     		\left\|x'+\frac{\nu}{\xi}y'\right\|& =\left\|\frac{s_{1}-s_{n_{0}}}{\|s_{1}-s_{n_{0}}\|}+\frac{\nu}{\xi}s_{1}\right\| \\
     		&\geq\frac{s^{*}\left(s_{1}\right)-z^{*}\left(s_{n_{0}}\right)}{\left\|s_{1}-s_{n_{0}}\right\|}+\frac{\nu}{\xi}s^{*}\left(s_{1}\right) \\
     		&>\frac{1-\nu}{1+\nu}+\frac\nu\xi\\
     		&>\left(1+\frac{\nu}{\xi}\right)-3\delta.
     	\end{aligned}$$
     	Since $\delta$ can be made extremely small at will, we obtain$$\begin{aligned}\widetilde C_\mathrm{N J}^p(\xi,\nu,X)&\geq\frac{\xi^p\|\ x'+\frac{\nu}{\xi} y'\|^p+\nu^p\|x'-
     			\frac{\xi}{\nu} y'\|^p}{2^{p-1}(\xi^p+\nu^p)}\\&\geq\frac{\xi^p(1+\frac{\nu}{\xi})^p+\nu^p}{2^{p-1}(\xi^p+\nu^p)}\\&=\frac{(\xi+\nu)^p+\nu^p}{2^{p-1}(\xi^p+\nu^p)}.\end{aligned}$$

     	It is clear that
     	
     	$$\begin{aligned}\frac{(\xi+\nu)^p+\nu^p}{2^{p-1}(\xi^p+\nu^p)}<\frac{(\xi+\nu)^{p}}{2^{p-2}(\xi^{p}+\nu^{p})}\end{aligned}$$
     	for $\xi,\nu>0$, as well as Corollary \ref{section:6.1}, which demonstrates that $X$ is uniformly non-square and therefore reflexive. We can conclude that $X$ has a normal structure since as we already discussed normal structure and weak normal structure coincide. 
     	
     \end{proof}


\begin{thebibliography}{00}
		\bibitem{20}{\sc A. Amini-Harandi and S. Khosravani},
		{\it On the calculations of the generalized von Neumann-Jordan constants $C_\mathrm{N J}^{(p)}(L_r(\Omega, \Sigma, \mu))$ and $\widetilde{C}_\mathrm{N J}^{(p)}(L_r(\Omega, \Sigma, \mu))$}, Math. Inequal. Appl., {\bf 27}, 3 (2024), 659-667.
		
		\bibitem{03}{\sc Alonso, J, P. Martin and P. L. Papini}, {\it  Wheeling around von Neumann–Jordan constant in Banach spaces}, Studia Mathematica., 188 (2008), 135-150.
		
		
		\bibitem{15}{\sc Brodskii, M and Milman, D},
		{\it  On the center of a convex Set}, Proc. Am. Math. Soc., {\bf 59}, 5 (1948), 837-840.
		
		
		\bibitem{16}{\sc Clarkson, J.A},
		{\it  Uniformly convex spaces}, Trans. Am. Math. Soc., {\bf 40}, 3 (1956), 396-414.
		
		\bibitem{01}{\sc C. Yang and F. Wang},
		{\it An extension of a simply inequality between Von Neumann- Jordan and James constants in Banach spaces}, Acta Math. Sinica Engl. Ser., {\bf 33}, 9 (2017), 1287-1296.
		
		
		
		\bibitem{02}{\sc C. Yang and H. Li},
		{\it  An inequality between Jordan–von Neumann constant and James constant}, Applied mathematics letters., {\bf 23}, 3 (2010), 277-281.
		
		
		\bibitem{06}{\sc Cui Y, Huang W, Hudzik H, et al},
		{\it Generalized von Neumann-Jordan constant and its relationship to the fixed point property}, Fixed point theory and applications., {\bf 40},  (2015), 1-11.
		
		\bibitem{08}{\sc Dhompongsa, S, Kaewkhao, A and Tasena, S}, {\it On a generalized James constant}, J. Math. Anal. Appl., {\bf 285}, 2 (2003), 419-435 .
	
		\bibitem{07}{\sc Dhompongsa, S, Piraisangjun, P and Saejung, S}, {\it On a generalised Jordan-von Neumann constants and uniform normal structure}, Bull. Austral. Math. Soc., {\bf 67}, 2 (2003), 225-240.
	
		\bibitem{17}{\sc 	Gao, J and Lau, K.S}, {\it  On two classes of Banach spaces with normal structure}, Stud. Math., {\bf 99}, 1 (1991), 41-56.
	
	    \bibitem{01.1} {\sc J. A. Clarkson}, {\it The von Neumann-Jordan constant for the Lebesgue space}, Ann. of Math., {\bf 38}, 1 (1937), 114–115.

	    \bibitem{09}{\sc Kato, M and Takahashi, Y}, {\it On the von Neumann-Jordan constant for Banach spaces}, Proc. Amer. Math.Soc., {\bf 125}, 4 (1997), 1055-1062.
	
	   \bibitem{14}{\sc Prus, Stanisław}, {\it  Geometrical background of metric fixed point theory}, in: W.A. Kirk, B. Sims (Eds.), Handbook of Metric Fixed Point Theory, Kluwer Academic Publishers, Dordrecht, 2001, pp. 93–132.

	  \bibitem{04}{\sc Q. Liu  and Y. Li }, {\it  On a New Geometric Constant Related to the Euler-Lagrange Type Identity in Banach Spaces}, Mathematics., {\bf 9}, 2 (2021), 116.
	
	
	\bibitem{05}{\sc Q. Liu, C. Zhou, Sarfraz M, et al},
	{\it  On new moduli related to the generalization of the Parallelogram law}, Bulletin of the Malaysian Mathematical Sciences Society., 45(2022), 307-321.
	
		\bibitem{13}{\sc Q, Ni, Q, Liu and Y, Zhou },
	{\it Skew generalized von Neumann-Jordan constant in Banach Spaces}, Hacettepe Journal of Mathematics and Statistics, 1-9.

\bibitem{11}{\sc R. C. James},
{\it Uniformly non-square Banach spaces}, Ann. of Math., {\bf 80}, 3 (1964), 542-550.

	\bibitem{10}{\sc Takahashi, Y and Kato, M},
	{\it A simple inequality for the von Neumann-Jordan and James constants of a Banach spaces}, J. Math. Anal. Appl., {\bf 359}, 2 (2009), 602-609.

	\bibitem{12}{\sc Takahashi, Y and Kato, M },
	{\it Von Neumam-Jordan constant and uniformly non-square Banach spaces}, Nihonkai Math. J., 155-169 (1998).
	
	\bibitem{18}{\sc X. R. Yang and C. Yang},
	{\it An inequality between $L_\mathrm{Y J}(\lambda,\mu,X)$ constant and generalized James constant}, Math. Inequal. Appl., {\bf 27}, 3 (2024), 571-581.
	
	\bibitem{19}{\sc X. R. Yang, H. Li and C. Yang},
	{\it On the $L_\mathrm{Y J}(\lambda,\mu,X)$ constant for the regular octagon space}, Filomat., {\bf 38}, 5 (2024), 1583-1593.
	
\end{thebibliography}
\end{document}